\documentclass[12pt]{article}
\usepackage{geometry}\usepackage{enumerate}
\usepackage{graphicx,tikz,caption,subcaption}
\usepackage{fullpage}
\usepackage{hyperref}
\usepackage{enumerate}
\usepackage{hyperref}
\usepackage{amsmath}
\usepackage{amssymb}
\usepackage{amsfonts}
\usepackage{amsthm}
\usepackage[capitalise]{cleveref}
\usepackage{geometry}
\usepackage{float}
\usepackage{pgfplots}
\pgfplotsset{compat=1.15}
\usepackage{mathrsfs}
\usetikzlibrary{arrows}
\usepackage{longtable}
\usepackage{sectsty}
\sectionfont{\fontsize{14.4}{17}\bfseries} 
\newtheorem{innerproblem}{Problem}
\newenvironment{problem*}
  {
   \innerproblem}
  {\endinnerproblem}
\newtheorem{innerdefinition}{Definition}
\newenvironment{definition*}
  {
   \innerdefinition}
  {\endinnerdefinition}
\newtheorem{theorem}{Theorem}[section]

\newtheorem{claim}{Claim}[section]

\newtheorem{conjecture}{Conjecture}

\newtheorem{observation*}{Observation}[section]

\newtheorem{problem}{Problem}

\newtheorem{case}{Case}

\newcommand{\sat}{\mathrm{sat}}
\newcommand{\diam}{\mathrm{diam}}
\newcommand{\SAT}{\mathrm{SAT}}

\usetikzlibrary{patterns}
\usetikzlibrary{shapes}
\usetikzlibrary{decorations.pathreplacing}
\usetikzlibrary{decorations.pathmorphing}
\usetikzlibrary{positioning}
\definecolor{mypink}{RGB}{255, 105, 160}
\definecolor{myorange}{RGB}{255, 178, 49}
\definecolor{darktangerine}{rgb}{1.0, 0.66, 0.07}
\definecolor{darkpastelgreen}{rgb}{0.01, 0.75, 0.24}

\title{Saturation numbers for joins of graphs and characterization of extremal graphs}
\author{Xinying Hua \,  Yuejian Peng\thanks{E-mail addresses:  xyhuamath@163.com (X. Hua), ypeng1@hnu.edu.cn (Y. Peng, corresponding author).}\\
{\footnotesize  School of Mathematics, Hunan University, Changsha, Hunan, 410082, P.R. China}\\
}

\begin{document}
\maketitle
\begin{abstract}
A graph $G$ is $H$-saturated if $G$ contains no $H$-copy as a subgraph, but adding any edge between two non-adjacent vertices in $G$ creates a copy of $H$. The saturation number $\sat(n,H)$ is the minimum number of edges in an $n$-vertex $H$-saturated graph.
Saturation number for the join of a vertex and a graph $F$, denoted by $K_1\vee F$, has received considerable attention. 
Cameron and Puleo [Discrete Math. 345 (2022), 112867] showed that $\sat(n,K_1 \vee F)\le n-1+\sat(n-1, F)$ for all $n > |V(F)|$.
A natural question is to ask when  the above equality holds. Existing results for $\sat(n,K_1 \vee F)$ always constrain that a non-empty graph $F$ contains no isolated vertex. In this paper, we investigate the saturation number of $K_1\vee F$ when a non-empty graph $F$ contains an isolated vertex.
We first determine the saturation number for $K_1\vee F$ when $F=K_{p-1}\cup K_1$. When $p=3$, we extend the result to any number of isolated vertices, and determine the saturation number for $K_1\vee F$ when $F=K_{2}\cup qK_1$, or $F=2K_{2}\cup qK_1$ for any $q\ge 1$. Moreover, all minimum saturated graphs are fully characterized. In our results,  $\sat(n,K_1 \vee F)= n-1+\sat(n-1, F)$ holds when $F=K_2\cup qK_1$, or $F=2K_2\cup qK_1$ for any $q\ge 1$; but fails when $F=K_{p-1}\cup K_1$ for $p\ge 4$.
\end{abstract}
\begin{flushleft}
\textbf{Keywords:} Saturation number; Saturated graph\\
\end{flushleft}

\section{Introduction}
All graphs involved in this paper are simple graphs. Let $G=(V(G),E(G))$ be a graph.
For graphs $F$ and $G$, $G$ is said to be \emph{$F$-saturated} if $G$ contains no copy of $F$ as a subgraph, but for any $e\in E(\overline{G})$, $G + e$ (obtained by adding edge $e$ to $G$) produces a copy of $F$. The \emph{saturation number} of $F$, denoted by $\sat(n,\,F)$, is the minimum number of edges in an $F$-saturated graph of order $n$. Further, $$\SAT(n,\,F): = \{G : |V(G)|=n,  |E(G)|=\sat(n, F), \mbox{ and } G \mbox{ is } F\mbox{-saturated}\},$$ and each graph in $\SAT(n,\,F)$ is called an \emph{extremal graph} for $F$.
We will denote the path, cycle, star, complete graph and empty graph on $n$ vertices by $P_n$, $C_n$, $S_n$, $K_n$ and $\overline{K_n}$, respectively. Also, we let $K_n^-$ denote the graph obtained by deleting one edge from the complete graph $K_n$; let $K_s^{+t}$ denote the graph obtained by attaching $t$ leaves to the same vertex of $K_s$. For any two vertex-disjoint graphs $G_1$ and $G_2$, let $G_1\vee G_2$ denote the join graph obtained from $G_1 \cup G_2$ by adding all edges between $V(G_1)$ and $V(G_2)$.

The notion of the saturation number of a graph was first introduced by Erdős, Hajnal and Moon~\cite{erdos1964}. They showed that $\sat(n, K_{k+1})=(k-1)n-\binom{k}{2}$ with $K_{k-1}\vee \overline{K_{n-k+1}}$ as the unique extremal graph. Later, Kászonyi and Tuza \cite{Kaszonyi} provided a general upper bound on the saturation number for any graph, and determined $\sat(n,P_k)$, $\sat(n, S_k)$ and $\sat(n,tK_2)$. Following these foundational results, many studies have investigated saturation numbers for various graph families, such as cycles \cite{Chen2009,Chen2011,Lan2025,Ma2021,Ollmann1972,Tuza1989}, trees \cite{Faudree2009}, forests \cite{Cao2023,Chen2015,Fan2015,Faudree2009,He2023,Kaszonyi,Lv2023}, complete multipartite graphs \cite{Chen2014,He2021,Huang2024}, and cliques \cite{Chen2024,Faudree2009b,Zhu2025}.
Despite these advances, $\sat(n, F)$ and $\SAT(n, F)$ have been determined for only a few graph families. Comprehensive summaries of known results can be found in \cite{Currie2021} and \cite{Faudree2011}.

Recall that $K_{k-1}\vee \overline{K_{n-k+1}}$ is the unique extremal graph for $K_{k+1}$; this  highlights the fundamental role of join of graphs in characterizing minimum saturated graphs.
Since  $K_{k+1}$ can be viewed as the join of $K_1$ and $K_k$, as a natural extension, we consider the question of determining $\sat(n, K_s \vee F)$.

Most work on join graphs $K_s \vee F$ concentrates on deriving $\sat(n,K_1 \vee F)$ based on the saturation number of $F$.
Cameron and Puleo \cite{Ca} showed that $\sat(n,K_1 \vee F)\le n-1+\sat(n-1, F)$ for all $n > |V(F)|$.
An interesting problem is to find graphs $F$ such that the above equality holds. Recently, Hu, Luo and Peng \cite{HU} proved the following theorem.
	
\begin{theorem}\label{T12} \text{\bf \cite{HU}} Let $s ,n$ be positive integers and $F$ be a graph without isolated vertex. Then for $n \geq 3s^2 -s +2\,\sat(n-s, F)+1$, we have
\begin{equation*}
\begin{aligned}
\sat(n, K_s \vee F) = \binom{s}{2} + s(n-s) + \sat(n-s, F) \, .
\end{aligned}
\end{equation*}
\end{theorem}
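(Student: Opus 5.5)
The statement has two directions, and the upper bound is the routine one. Let $H$ be an $F$-saturated graph on $n-s$ vertices with $\sat(n-s,F)$ edges, and put $G=K_s\vee H$. Then $G$ has exactly $\binom{s}{2}+s(n-s)+\sat(n-s,F)$ edges. Every non-edge of $G$ lies inside $H$, and adding it creates a copy of $F$ in $H$, hence a copy of $K_s\vee F$ in $G$.

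It remains to check that $G$ itself contains no $K_s\vee F$. Suppose a copy exists, with the $K_s$-part on a vertex set $S$ and the $F$-part on a vertex set $T$. Let $A=V(K_s)$, and let $k=|S\cap A|+|T\cap A|$ count the copy's vertices inside $A$, so $k\le s$. Delete from $F$ the vertices mapped into $A$, and also the vertices of $S$ lying in $H$. Since $F$ has no isolated vertex, I expect a counting or shifting argument to show that one can rebuild a copy of $F$ inside $H$. The tool is the standard trick: a vertex of $S$ lying in $H$ is adjacent to all of $T$, so it can replace a vertex of $T$ that was placed in $A$. This would contradict $H$ being $F$-free. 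I expect the cleanest route is to induct on $s$, using the Cameron--Puleo-type argument for $K_1\vee F$ at each step.

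For the lower bound, let $G$ be a $K_s\vee F$-saturated graph on $n$ vertices with the minimum number of edges. The plan has three steps.

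\textbf{Step 1: find $s$ dominating vertices.} First, a saturated graph has small diameter. If $uv\notin E(G)$, then $G+uv$ contains $K_s\vee F$ through $uv$, so $u$ and $v$ have many common neighbours unless $uv$ lands in the $F$-part. Count the vertices of low degree, say degree less than some threshold around $3s$. The bound $|E(G)|\le\binom{s}{2}+s(n-s)+\sat(n-s,F)$ together with the assumption $n\ge 3s^2-s+2\sat(n-s,F)+1$ forces most vertices to have small degree. From this I want to show there is a set $A$ of $s$ vertices of degree $n-1$. Suppose instead that fewer than $s$ vertices are universal. Then some non-edge $uv$ has both endpoints of low degree, and the copy of $K_s\vee F$ created by adding $uv$ needs $s$ vertices adjacent to $u$ or $v$. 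Charging these required adjacencies over all low-degree pairs should force more than $s(n-s)$ edges incident to them. Combined with the edges needed elsewhere, this would exceed the upper bound.

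\textbf{Step 2: restrict to $H$.} Once $A$ is found, set $H=G-A$. Since $G$ has no $K_s\vee F$, the graph $H$ is $F$-free. For a non-edge $xy$ of $H$, the graph $G+xy$ contains a copy of $K_s\vee F$. Here the hypothesis that $F$ has no isolated vertex is essential: I want to convert this copy into a copy of $F$ in $H+xy$. Vertices of $A$ occupying $F$-positions can be exchanged with $H$-vertices occupying $K_s$-positions, since the latter are adjacent to everything in the copy. After these exchanges, the $F$-part avoids $A$ and still uses the edge $xy$, because otherwise $G$ would already contain the copy. So $H$ is $F$-saturated, giving $|E(H)|\ge\sat(n-s,F)$ and hence the bound.

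The main obstacle is Step 1, the existence of $s$ universal vertices, which is where the quadratic-in-$s$ threshold on $n$ enters. I would try proving it one vertex at a time: find one universal vertex $w$, then apply induction to $G-w$, which is $K_{s-1}\vee F$-saturated. The induction threshold must be tracked, and the inductive assumption needs $n-1\ge 3(s-1)^2-(s-1)+2\sat(n-s,F)+1$, which our bound implies.
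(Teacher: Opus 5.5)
The paper only quotes this theorem from Hu, Luo and Peng and gives no proof, so I am assessing your outline on its own.

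\textbf{What works.} In the upper bound and in Step~2 the swap works directly. Since $|T\cap A|\le s-|S\cap A|=|S\setminus A|$, you can send $T\cap A$ injectively into $S\setminus A$, whose vertices are adjacent to everything else in the copy. No induction is needed. Contrary to what you say in Step~2, the hypothesis that $F$ has no isolated vertex plays no role there. Deleting a universal vertex from a $K_s\vee F$-saturated graph does leave a $K_{s-1}\vee F$-saturated graph, and your threshold check for that induction is correct.

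\textbf{The gap.} Step~1 carries the entire lower bound, and you give no argument for it, only the expectation that ``charging'' should work. The induction does not help, because it still needs a first universal vertex. As described, the charging cannot work. It never invokes the no-isolated-vertex hypothesis, yet the conclusion of Step~1 is false without it. For $s=1$ and $F=K_2\cup K_1$ we have $\sat(n-1,F)=0$, so the threshold holds. But by Theorem~\ref{thm:K2}, the graphs $aK_3\cup S_{n-3a}$ with $a\ge 1$ are minimum saturated graphs with no universal vertex. Also, for $s\ge 2$, the condition that non-adjacent pairs have $s$ common neighbours is already met by $K_{s,n-s}$, which has $s(n-s)$ edges and no universal vertex. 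So a count based only on those required adjacencies cannot push past $s(n-s)$.

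\textbf{What is missing.} The missing ingredients are local.
\begin{enumerate}[(1)]
\item Because $F$ has no isolated vertex, a three-case check (both endpoints in the $K_s$-part, one in each part, both in the $F$-part) shows that any two non-adjacent vertices of a $K_s\vee F$-saturated $G$ have at least $s$ common neighbours. In particular $\delta(G)\ge s$.
\item Suppose a vertex $u$ has $d_G(u)=s$. For every non-neighbour $v$, the copy $K$ of $K_s\vee F$ in $G+uv$ has minimum degree at least $s+1$, so $N_K(u)=N_G(u)\cup\{v\}$. Check whether $u$ lies in the $K_s$-part (possible only when $F=K_2$) or in the $F$-part. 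Either way, $N_G(u)$ is an $s$-clique completely joined to $v$. Hence all $s$ vertices of $N_G(u)$ are universal, and your Step~2 finishes the proof.
\item Suppose instead $\delta=\delta(G)\ge s+1$, and take $u$ of minimum degree. Then $\sum_{w\in N(u)}d(w)\ge \delta+s(n-1-\delta)$, and the $n-\delta$ vertices outside $N(u)$ each have degree at least $\delta$. So $2e(G)\ge \delta+s(n-1-\delta)+\delta(n-\delta)$, which is concave in $\delta$. For $s+1\le\delta\le 2s$ this gives $2e(G)\ge\min\{(2s+1)n-2s^2-3s,\;3sn-6s^2+s\}$, and for larger $\delta$ you simply have $2e(G)\ge(2s+1)n$.
\end{enumerate}
Under the stated threshold, the bound in (3) exceeds $2\binom{s}{2}+2s(n-s)+2\sat(n-s,F)$ for every $s\ge2$. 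For $s=1$ it does so except at the boundary value $n=2\sat(n-1,F)+3$, which needs a separate equality analysis. Without an argument of this kind, Step~1, and with it the lower bound, remains unproved.
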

Qiu, He, Lu and Xu~\cite{Qiu} commented that the condition of Theorem~\ref{T12} implies that $F$ must contain an isolated edge.
In  \cite{HU1}, Hu, Ji and Cui showed that $\sat(n,K_1 \vee P_t)=n-1+\sat(n-1,P_t)$ holds for $t \geq 5$ and sufficient large $n$.
In \cite{Song}, Song, Hu, Ji and Cui proved that $\sat(n,K_1 \vee C_4)=\lfloor\frac{5n-10}{2}\rfloor$ holds for all $n \geq 6$, and gave a complete characterization of the extremal graphs.
In~\cite{Qiu}, Qiu, He, Lu and Xu further proved that $\sat(n,K_1 \vee C_t)=n-1+\sat(n-1, C_t)$ holds for $t \geq 8$ and $n \geq 56t^3$.
These results provide valuable insights into the connection between the saturation number of $K_1 \vee F$ and $F$ for specific graph classes.

In a different direction, Chen~\cite{GChen2009} considered the case where $F$ is an empty graph and determined the exact value of $\sat(n, K_s \vee pK_1)$. Motivated by these complementary studies, we focus on the intermediate scenario where $F$ contains some isolated vertices.

We first consider the case when $F$ contains exactly one isolated vertex. In particular, we determine the exact value of $\sat(n, K_1\vee F)$ when $F=K_{p-1}\cup K_1$.
\begin{theorem}\label{thm:Kp+}
Let $p\ge 4$, $0\le k\le p-1$, $n\ge p+1$, and $n\equiv k\pmod{p}$. Then
$$\sat(n, K_1\vee(K_{p-1}\cup K_1))=\frac{(p-1)n}{2}+\frac{k(k-p)}{2}.$$
Furthermore, $\SAT(n, K_1\vee(K_{p-1}\cup K_1))=\{\lfloor\frac{n}{p}\rfloor K_p\cup K_k\}$.
\end{theorem}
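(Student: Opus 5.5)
The upper bound comes from the construction $G_0=\lfloor\frac{n}{p}\rfloor K_p\cup K_k$, and the lower bound from a structural description of an arbitrary saturated graph. Write $H=K_1\vee(K_{p-1}\cup K_1)$; this is $K_p$ with one pendant edge attached to a vertex of the clique.

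\textbf{Upper bound.} $G_0$ is $H$-free: every copy of $K_p$ in $G_0$ is a whole component, so no clique vertex has a neighbour outside it, and $K_k$ has fewer than $p$ vertices. Now add a missing edge $uv$. Any missing edge has at least one endpoint, say $u$, in some $K_p$ component, and $v$ lies outside that component. Then that $K_p$ together with the edge $uv$ is a copy of $H$. The edge count is
$$\lfloor n/p\rfloor\binom p2+\binom k2=\frac{(p-1)(n-k)}{2}+\frac{k(k-1)}{2}=\frac{(p-1)n}{2}+\frac{k(k-p)}{2}.$$

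\textbf{Structure of a saturated graph.} Let $G$ be $H$-saturated on $n$ vertices. Since $G$ is $H$-free, every $K_p$ in $G$ is a whole component of $G$. Take a non-edge $uv$. A copy of $H$ in $G+uv$ must use $uv$, and there are two ways it can do so.
\begin{enumerate}
\item[(a)] $uv$ is the pendant edge. Then one endpoint, say $u$, lies in a $K_p$ of $G$, which is therefore a $K_p$ component of $G$.
\item[(b)] $uv$ lies inside the $K_p$ of $H$. Then $u$ and $v$ have a set $S$ of $p-2$ common neighbours forming a clique, and some vertex of $S\cup\{u,v\}$ has a neighbour outside $S\cup\{u,v\}$. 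In particular $u$ and $v$ are in the same component.
\end{enumerate}
Call the components of $G$ that are not $K_p$ the \emph{residual} components. Two vertices in different residual components are nonadjacent, yet they satisfy neither (a) nor (b). Hence there is at most one residual component $R$, with $m$ vertices, so that $n=ap+m$ and $G=aK_p\cup R$. It remains to show that $e(R)\ge\binom p2\lfloor m/p\rfloor+\binom{k'}{2}$, where $k'\equiv m\pmod p$, with equality only when $R=K_{k'}$ (and then $k'<p$). Since $m\equiv n\pmod p$, we have $k'=k$, so this gives both the bound and the characterization.

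\textbf{Bounding $e(R)$.} If $m<p$, condition (b) can never hold, so $R$ has no non-edge and $R=K_m$, which is the extremal configuration. If $m\ge p$, then $R$ is connected, contains no $K_p$, and every non-edge of $R$ lies in a $K_{p-2}$-common-neighbourhood as in (b). So $R$ is $K_p$-saturated, and $e(R)\ge \sat(m,K_p)=(p-2)m-\binom{p-1}2$ by Erdős–Hajnal–Moon. Since $k'(k'-p)\le 0$, this beats $\frac{(p-1)m}2+\frac{k'(k'-p)}2$ as soon as $(p-3)m>(p-1)(p-2)$. That covers all $m\ge p+3$ when $p\ge 5$; for $p=4$ it needs $m\ge 7$.

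\textbf{Main obstacle.} The hard step is the finitely many small values of $m$, roughly $p\le m\le p+2$ (a bit more for $p=4$). Here the bound $\sat(m,K_p)$ alone is too weak: for instance $K_p^-$ has fewer edges than $K_p$. I would first use the extra "outside neighbour" requirement in (b). It rules out $R=K_p^-$ directly: there the clique $S\cup\{u,v\}$ is all of $R$, so nothing lies outside. More generally it forces the clique $S\cup\{u,v\}$ to be a proper subset of $R$ with an edge leaving it. Combining this with the minimum-degree structure of $K_p$-saturated graphs (every vertex of a non-complete $K_p$-saturated graph has degree at least $p-2$), I would do a direct count for $m\in\{p,\dots,p+2\}$. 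The target is to show $e(R)>\binom p2+\binom{m-p}2$, strictly, which yields both the inequality and the uniqueness of $\lfloor\frac{n}{p}\rfloor K_p\cup K_k$.
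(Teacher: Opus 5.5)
Your proof is not finished as written: the range $p\le m\le p+2$ is left to an unspecified \emph{direct count}. Your claim that $\sat(m,K_p)$ is too weak there is also false, except at $m=p$. The approach itself is sound.

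\textbf{How your route compares with the paper's.} Your decomposition matches the paper's disconnected case: every $K_p$ of $G$ is a whole component, there is at most one residual component $R$, and a residual $R$ on $m<p$ vertices is complete with $m=k$. You differ in bounding the connected piece. You observe that $R$ is $K_p$-saturated and quote Erd\H{o}s--Hajnal--Moon. The paper instead argues from a minimum-degree vertex $u$.
\begin{itemize}
\item Every non-neighbour $v$ of $u$ has a $K_{p-2}$ inside $N_G(u)\cap N_G(v)$.
\item So $\delta\le p-2$ forces $G=K_{p-2}\vee\overline{K_{n-p+2}}$, whose size is exactly the EHM value.
\item If $\delta\ge p-1$, handshaking gives $e(G)\ge (p-1)n/2\ge h(n,p)$. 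The equality case ($(p-1)$-regular, $p\mid n$) is excluded by a short neighbourhood count.
\end{itemize}
Your route is shorter given EHM; the paper's is self-contained.

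\textbf{Closing the gap.} The difficulty comes only from discarding the term $k'(k'-p)/2$. The paper keeps it, which is where its threshold $n>p+\frac{k(k-p)+2}{p-3}$ comes from.
\begin{itemize}
\item If $p\nmid m$, then $1\le k'\le p-1$ gives $k'(k'-p)\le-(p-1)$. The target is then at most $(p-1)(m-1)/2$, and $(p-2)m-\binom{p-1}{2}-\frac{(p-1)(m-1)}{2}=\frac{(p-3)(m-p+1)}{2}>0$ for every $m\ge p$.
\item If $p\mid m$ and $m=p$, your own outside-neighbour observation rules it out. Condition (b) needs $p+1$ vertices in the component, so such a residual component would have no non-edges, i.e.\ it would be a $K_p$.
\item If $p\mid m$ and $m\ge 2p$, your crude inequality already holds, because $2p(p-3)-(p-1)(p-2)=p^2-3p-2>0$ for $p\ge 4$.
\end{itemize}
Together these give strict inequality for every residual $R$ with $m\ge p$. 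That yields both the value of $\sat(n,K_1\vee(K_{p-1}\cup K_1))$ and the uniqueness of $\lfloor n/p\rfloor K_p\cup K_k$, with no further case analysis.
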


We further consider the question that $F$ consists of matchings and any number of isolated vertices. The case $p=3$ not included in Theorem~\ref{thm:Kp+} will be covered by the following theorem.
\begin{theorem}\label{thm:K2}
For $q\ge 1$ and $n\ge q+3$, $\sat(n, K_1\vee(K_{2}\cup qK_1))=n-1$. Furthermore, if $q=1$, $\SAT(n, K_1\vee(K_2\cup qK_1))=\{S_n, aK_3\cup S_{n-3a}(n>3a, n-3a\neq 3)\}$. If $q\ge 2$, $\SAT(n, K_1\vee(K_2\cup qK_1))=\{S_n\}$.
\end{theorem}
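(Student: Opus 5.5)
The idea is that $H=K_1\vee(K_2\cup qK_1)$ is a triangle with $q$ pendant vertices attached to one triangle vertex. So a graph contains $H$ if and only if it has a vertex $v$ with $\deg(v)\ge q+2$ whose neighbourhood contains an edge. Throughout, call a vertex \emph{good} if it lies on a triangle. An $H$-free graph then has every good vertex of degree at most $q+1$. I would prove the upper bound by exhibiting the graphs in the statement, and the lower bound by a component analysis.

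\textbf{Upper bound and extremal graphs.} The star $S_n$ is triangle-free, hence $H$-free. Adding an edge between two leaves creates a triangle through the centre, which has degree $n-1\ge q+2$, so a copy of $H$ appears. For $q=1$, consider $aK_3\cup S_{m}$ with $m=n-3a\ge 1$.
\begin{itemize}
\item It is $H$-free, since triangle vertices have degree $2<3$.
\item Adding an edge at a triangle vertex (to another triangle, or to any star vertex) gives that vertex degree $3$ with an edge in its neighbourhood, so $H$ appears.
\item Adding an edge between two star leaves needs the centre to have degree $m-1\ge 3$, i.e.\ $m\ge 4$.
\item For $m\in\{1,2\}$ there are no internal non-edges of the star.
\item This leaves $m=3$ as the only failure: $S_3=P_3$, and closing it gives a triangle with all degrees $2$, so no $H$ appears.
\end{itemize}
Each of these graphs has exactly $n-1$ edges.

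\textbf{Lower bound.} Let $G$ be $H$-saturated on $n$ vertices. If $G$ is connected then $|E(G)|\ge n-1$ immediately. Otherwise, take $u,v$ in different components. The edge $uv$ is a bridge of $G+uv$, so it lies in no triangle. Hence in the new copy of $H$ it must be a pendant edge at the centre, and the centre is $u$ or $v$. Therefore one of $u,v$ is good in $G$ with $\deg_G\ge q+1$. Consequently, all components except at most one consist entirely of good vertices, and these vertices have degree exactly $q+1$ by $H$-freeness. Such a component of order $m$ has $(q+1)m/2\ge m$ edges. The remaining component has at least (its order)$-1$ edges, so summing gives $|E(G)|\ge n-1$.

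\textbf{Equality case.} Suppose $|E(G)|=n-1$. If $G$ is connected, it is a tree. Adding an edge to a tree creates a triangle only between vertices at distance $2$, so every non-adjacent pair must be at distance $2$. Thus $G$ has diameter at most $2$, i.e.\ $G=S_n$. If $G$ is disconnected, the edge count forces the following.
\begin{itemize}
\item Every all-good component satisfies $(q+1)m/2=m$. This is impossible for $q\ge 2$, so in that case $G=S_n$. For $q=1$ the component is $2$-regular with every vertex on a triangle, i.e.\ a $K_3$.
\item There is exactly one exceptional component, and it must be a tree.
\item That tree must itself be saturated internally, so it is a star $S_m$, and by the upper-bound check $m\ne 3$.
\end{itemize}
An all-$K_3$ graph has $n$ edges, so it is not extremal. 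This gives exactly the listed families.

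The main obstacle is the disconnected case of the lower bound: correctly arguing that the bridge edge must be a pendant edge at the centre, and then handling the degree-regularity forcing in the good components.
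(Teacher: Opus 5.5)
Your proof is correct and follows essentially the same route as the paper. The cross-component edge $uv$ must be a pendant edge at the centre of the new $H$-copy, which forces every vertex outside one component to lie on a triangle and have degree exactly $q+1$; this gives $e(G)\ge n-1$, and your equality analysis matches the paper's (a tree of diameter $2$ is a star, $2$-regular triangle components are $K_3$'s, and $P_3$ is the excluded exception). The only difference is cosmetic: the paper anchors the exceptional component at a vertex of degree at most $1$ after disposing of $\delta(G)\ge 2$, while you argue directly with arbitrary pairs of vertices in different components.
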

\begin{theorem}\label{thm:2K2}
Let $q\ge 1$ and $n\ge q+5$. Then
\begin{enumerate}[(1)]
  \item $\sat(n, K_1\vee(2K_{2}\cup qK_1))=n+2$.
  \item $\SAT(n, K_1\vee(2K_{2}\cup qK_1))=\{K_4^{+(n-4)}\}$.
\end{enumerate}
\end{theorem}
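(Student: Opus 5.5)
The plan is to prove the upper bound by the construction $K_4^{+(n-4)}$, and the lower bound together with uniqueness by a structural analysis of an arbitrary saturated graph with at most $n+2$ edges. Throughout, write $H=K_1\vee(2K_2\cup qK_1)$. The basic reformulation is:
\[
\text{a graph contains } H \iff \text{it has a vertex } w \text{ with } d(w)\ge q+4 \text{ whose neighbourhood } N(w) \text{ spans two disjoint edges.}
\]

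\textbf{Upper bound.} Let the $K_4$ be on $\{a,b,c,d\}$, with the $n-4$ leaves attached to $a$. Only $a$ has degree at least $q+4$, and this uses $n\ge q+5$. The graph $G[N(a)]$ is the triangle $bcd$ plus isolated leaves, so it has matching number $1$, and $K_4^{+(n-4)}$ is $H$-free. Now add a non-edge.
\begin{itemize}
\item A leaf--leaf edge $xy$ gives the disjoint pair $xy,\,bc$ in $N(a)$.
\item A leaf--$K_4$ edge, say $xb$, gives the disjoint pair $xb,\,cd$.
\end{itemize}
In both cases $d(a)=n-1\ge q+4$, so a copy of $H$ appears. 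The graph has $6+(n-4)=n+2$ edges.

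\textbf{Lower bound: reducing to a dominating vertex.} Let $G$ be $H$-saturated with $e(G)\le n+2$. Call $w$ \emph{big} if $d(w)\ge q+3$.

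Adding a non-edge $uv$ creates a copy of $H$ whose centre $w$ satisfies one of two conditions:
\begin{itemize}
\item $w\in\{u,v\}$, in which case $w$ is big in $G$ and $G[N(w)]$ already has a perfect-ish structure containing two disjoint edges after adding $uv$; or
\item $w\notin\{u,v\}$, in which case $u,v\in N(w)$, $d_G(w)\ge q+4$, and $uv$ together with some edge of $G[N(w)-u-v]$ form the matching.
\end{itemize}

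Since $e(G)\le n+2$, the number of big vertices is bounded: their degrees sum to at most $2n+4$. Consequently most non-adjacent pairs $u,v$ of non-big vertices must have a common big neighbour $w$ with $d(w)\ge q+4$.

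I would use this to show there is a single vertex $w$ adjacent to all but $O(1)$ vertices. Two vertices each adjacent to nearly all the others would already force roughly $2n$ edges. Then I would upgrade this to $w$ being adjacent to every vertex, by handling the leftover vertices $x\notin N[w]$ directly. Such an $x$ has small degree. A non-edge $xy$ with $y$ a typical neighbour of $w$ must be completed through a centre in $\{x,y\}$ or through a second big vertex, and either option costs too many edges.

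Isolated vertices are handled in the same way. For an isolated $x$, every added edge $xy$ forces $y$ to be big and to have $G[N(y)]$ of matching number $1$ with suitable structure, which is impossible for a low-degree $y$.

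\textbf{Analysing $N(w)$.} Once $w$ is dominating, we have $e(G)=n-1+e(G[N(w)])$. Moreover $M=G[N(w)]$ has matching number at most $1$, while adding any edge inside $M$ must create two disjoint edges. If it did not, the copy of $H$ would need another centre $c$ of degree at least $q+4$, which is too costly by the same counting.

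A graph with matching number at most $1$ is either a star (possibly with isolated vertices) or a triangle with isolated vertices. The cases go as follows.
\begin{itemize}
\item An empty $M$ fails, since adding one edge does not create two disjoint edges.
\item A star $K_{1,m}$ with $m\ge2$ fails, since adding an edge between two of its leaves yields only a triangle. Rescuing these edges would need the star centre $c$ as a second centre of high degree, pushing $e(G)$ near $2n$.
\item A single edge $K_{1,1}$ fails for the same reason applied to edges incident to it.
\end{itemize}
Hence $M$ is a triangle plus isolated vertices, so $e(G)=n+2$ and $G\cong K_4^{+(n-4)}$.

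\textbf{Expected main obstacle.} The hardest step is the reduction to a dominating vertex, that is, turning the edge bound $e(G)\le n+2$ plus saturation into a vertex of degree $n-1$. I would attack it by counting degree sums over big vertices. I would also split according to whether $G$ has one or at least two vertices of degree at least $q+4$, noting that the latter case already forces $e(G)\ge 2(q+4)-1+(\text{remaining vertices})$. The small leftover configurations, where $n$ is close to $q+5$, would be checked by hand.
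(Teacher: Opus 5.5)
Your construction and upper-bound check are correct. Your endgame also matches the paper's final step: once some $w$ has $d(w)=n-1$, you get $e(G)=n-1+e(G[N(w)])\le n+2$ with $G[N(w)]$ of matching number at most $1$.

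\textbf{The main gap.} The step you flag as hardest, producing a vertex of degree $n-1$, is not proved, and the tools you sketch for it do not work as stated.
\begin{itemize}
\item Estimates like ``adjacent to all but $O(1)$ vertices'' or ``roughly $2n$ edges'' cannot give an exact bound for every $n\ge q+5$.
\item Deferring ``$n$ close to $q+5$'' to a hand check is not a finite check, because $q$ is unbounded: $n=q+5$ occurs for every $q$.
\item Your split by the number of vertices of degree $\ge q+4$ does not dispose of the second case. For $q=1$, two vertices of degree $5$ cost only about $10$ edges, far below $n+2$.
\end{itemize}

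\textbf{The missing idea} is to anchor the argument at a vertex of minimum degree, as the paper does.
\begin{itemize}
\item First show $\delta(G)\neq 0$. If $x$ is isolated, adding $xy$ makes $y$ the centre, so every other vertex has degree $\ge q+3$, which is too many edges. (Incidentally, $G[N(y)]$ must then already contain $2K_2$; it does not have matching number $1$, as you wrote.)
\item Next show $\delta(G)\neq 2$ by a degree count. It uses that $G$ then has two vertices of degree $\ge q+3$, and that two non-adjacent degree-$2$ vertices need a common neighbour of degree $\ge q+4$. Together with $2e(G)\le 2n+4<3n$, this gives $\delta(G)=1$.
\item Take a leaf $u$ with neighbour $v$. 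For every $w$ with $uw\notin E(G)$, the centre of the copy of $H$ in $G+uw$ cannot be $u$. So either the centre is $w$, giving $d_G(w)\ge q+3$, or it is $v$, giving $w\in N(v)$ and $d_G(v)\ge q+4$.
\item This dichotomy yields $d_G(v)\ge q+4$, and $d_G(w)\ge q+3$ for every $w\notin N[v]$.
\item A short case analysis gives $\sum_{x\in N(v)\setminus\{u\}}d_G(x)\ge d_G(v)+5$; the contrary forces $e(G[N(v)\setminus\{u\}])\le 2$, and each such case fails.
\item Summing degrees gives $2e(G)\ge 2n+4+(q+1)|V(G)\setminus N[v]|$. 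So $e(G)\le n+2$ forces $N[v]=V(G)$ and $e(G[N(v)\setminus\{u\}])=3$.
\end{itemize}
This local argument replaces your global pair-covering heuristic and works uniformly for all $n\ge q+5$. Run under the hypothesis $e(G)\le n+2$, it proves both parts at once, as you intended.

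\textbf{An error in your endgame.} Your reason for excluding a star fails for $K_{1,3}$, which is the only star left since $e(G[N(w)])\le 3$. Let the star have centre $c$ and leaves $\ell_1,\ell_2,\ell_3$. Adding $\ell_1\ell_2$ creates the disjoint edges $\ell_1\ell_2$ and $c\ell_3$ inside $N(w)$, hence a copy of $H$. The correct witness is a non-edge $cz$ with $z$ isolated in $G[N(w)]$; such $z$ exists because $n-1\ge 5$. Then $G[N(w)]+cz\cong K_{1,4}$ has matching number $1$. The only other vertex that can reach degree $q+4$ is $c$, and its neighbourhood spans only edges through $w$, so no copy of $H$ appears.
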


The rest of this paper is organized as follows. In Section 2, we establish some necessary definitions and notations. In Section 3, we prove Theorem~\ref{thm:Kp+}. In Section 4, we prove Theorem~\ref{thm:K2}. In Section 5, we prove Theorem~\ref{thm:2K2}. We conclude with a remark in the final section.

\section{Definitions and notations}

For a graph $G$, let $e(G)=|E(G)|$. For two disjoint subsets $A, B \subseteq V(G)$, let $e(A, B)$ denote the number of edges in $G$ with one endpoint in $A$ and the other endpoint in $B$.
For any $v\in V(G)$, let $N_G(v)$ be the set of the neighbours of $v$ in $G$ and $N_G[v]=\{v\}\cup N_G(v)$. Also, $d_G(v) :=|N_G(v)|$, $\Delta(G) :=\max\limits_{v\in V(G)} d_G(v)$ and $\delta(G) :=\min\limits_{v\in V(G)} d_G(v)$.
For a vertex set $D\subseteq V(G)$, we define $N_{D}(v)=N_G(v)\cap D$, $N_{G}(D)=\cup_{v\in D}N_{G}(v)$ and $N_G[D]=D\cup N_G(D)$.

A vertex of degree one is called a \emph{leaf}, and the unique edge incident to a leaf is called a \emph{pendent edge}. For a pair of vertices $u,v\in V(G)$, the \emph{distance} between $u$ and $v$, denoted by $d(u,v)$, is the length of a shortest $(u,v)$-path in $G$. Moreover, $\diam(G)=\max\{d(u,v)|u,v\in V(G)\}$.
For $U\subseteq V(G)$, let $G-U$ denote the graph obtained by deleting the vertex set $U$ and all edges incident to $U$. Let $G[U]$ denote the induced subgraph of $G$ on $U$.
We use $[n]$ to denote the set $\{1,2,\cdots,n\}$.


\section{Saturation number for $K_1\vee(K_{p-1}\cup K_1)$ and extremal graph}
In this section we will prove Theorem~\ref{thm:Kp+}. Recall the statement of Theorem~\ref{thm:Kp+}.

\noindent\textbf{Theorem~\ref{thm:Kp+}.}
Let $p\ge 4$, $0\le k\le p-1$, $n\ge p+1$, and $n\equiv k\pmod{p}$. Then
$$\sat(n, K_1\vee(K_{p-1}\cup K_1))=\frac{(p-1)n}{2}+\frac{k(k-p)}{2}.$$
Furthermore, $\SAT(n, K_1\vee(K_{p-1}\cup K_1))=\{\lfloor\frac{n}{p}\rfloor K_p\cup K_k\}$.

The graph $K_1\vee(K_{p-1}\cup K_1)$ can be viewed as the graph obtained by attaching one leaf to a complete graph $K_p$. For simplicity, denote $K_1\vee(K_{p-1}\cup K_1)$ by $K_p^{+1}$.

\begin{proof}[\textbf{Proof of Theorem~\ref{thm:Kp+}}]
Recall that $n\equiv k\pmod{p}$. Let $$h(n,p)=\frac{(p-1)n}{2}+\frac{k(k-p)}{2}.$$ Obviously, $\lfloor\frac{n}{p}\rfloor K_p\cup K_k$ is a $K_p^{+1}$-saturated graph with $h(n,p)$ edges. Then $\sat(n, K_p^{+1})\le h(n,p)$.
Let $G$ be a minimum $K_p^{+1}$-saturated graph of order $n$. Now we show that $e(G)\ge h(n,p)$ and the equality holds if and only if $G=\lfloor\frac{n}{p}\rfloor K_p\cup K_k$.

We consider the following two cases.

\setcounter{case}{0}
\begin{case}
$G$ is connected.
\end{case}
If $G$ contains $K_p$ as a subgraph, since $G$ is connected, $G$ contains a $K_p^{+1}$-copy, a contradiction. So, $G$ is $K_p$-free. Combining with $n\ge p+1$, $G$ is not complete. Let $u$ be a vertex in $G$ such that $d_G(u)=\delta(G)$.
\setcounter{claim}{0}
\begin{claim}\label{clm:KP+}
For any $v\notin N_G(u)$, $G[N_G(u)\cap N_G(v)]$ contains a $K_{p-2}$-copy.
\end{claim}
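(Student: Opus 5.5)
Since $v\notin N_G(u)$ and $v\neq u$, the pair $uv$ is a non-edge of $G$, so $G+uv$ contains a copy of $K_p^{+1}$ that uses the edge $uv$. I will locate this edge inside the copy and show that in every placement the common neighbourhood of $u$ and $v$ contains a $K_{p-2}$. Write the copy as a clique $Q$ on $p$ vertices together with a pendent edge $xy$, where $x\in Q$ and $y\notin Q$. Because $G$ is $K_p$-free (established just above), the clique $Q$ cannot lie entirely in $G$. The two possibilities are therefore: (a) $uv$ is an edge of $Q$, or (b) $uv$ is the pendent edge $xy$ while $Q$ is a $K_p$ already present in $G$. Case (b) is impossible because $G$ is $K_p$-free, so only (a) remains.

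In case (a), both $u$ and $v$ belong to $Q$. The remaining $p-2$ vertices of $Q$ are adjacent in $G+uv$ to both $u$ and $v$, through edges other than $uv$. Those edges already lie in $G$, so the $p-2$ vertices belong to $N_G(u)\cap N_G(v)$. They are also pairwise adjacent through edges of $Q$ different from $uv$, hence through edges of $G$. Therefore $Q\setminus\{u,v\}$ spans a $K_{p-2}$ in $G[N_G(u)\cap N_G(v)]$, which proves the claim.

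The argument uses only that $G$ is $K_p$-free and that $uv$ is a non-edge; the minimality of $d_G(u)$ is not needed for this claim and presumably enters the later counting. The only step needing care is the case analysis of where the new edge sits in $K_p^{+1}$. Every edge of $K_p^{+1}$ is either a clique edge or the pendent edge, so the case split is exhaustive. The pendent-edge case is excluded precisely because it would leave a $K_p$ in $G$.
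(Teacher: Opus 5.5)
Your proposal is correct and follows the paper's proof: because $G$ is $K_p$-free, the added edge $uv$ must lie in the $K_p$ of the $K_p^{+1}$-copy, so the other $p-2$ clique vertices form a $K_{p-2}$ inside $G[N_G(u)\cap N_G(v)]$. You are also right that the minimality of $d_G(u)$ is not used here; the paper's proof of this claim does not use it either.
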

\begin{proof}
Since $uv\in E(\overline{G})$, $G+uv$ contains a $K_p^{+1}$-copy $M$ such that $uv\in E(M)$.
Further, $uv$ is an edge in the $K_p$-copy of $M$, since $G$ contains no $K_p$-copy. Then $G[N_G(u)\cap N_G(v)]$ contains a $K_{p-2}$-copy.
\end{proof}
If $\delta(G)\le p-2$, then by Claim~\ref{clm:KP+}, $d_G(u)=p-2$, and $G[N_G(u)\cap N_G(v)]\cong K_{p-2}$ for any $v\notin N_G(u)$. Thus, $G$ contains $K_{p-2}\vee \overline{K_{n-p+2}}$ as a subgraph. As $G$ is $K_p$-free, $G\cong K_{p-2}\vee \overline{K_{n-p+2}}$ and $$e(G)=\binom{p-2}{2}+(n-p+2)(p-2)=(p-2)n+\frac{-p^2+3p-2}{2}=h_1(n,p).$$
It can be checked that $h_1(n,p)> h(n,p)$ when $n>p+ \frac{k(k-p)+2}{p-3}$. Since $k< p$, if $k\ge 1$, then $ n\ge p+1> p+ \frac{k(k-p)+2}{p-3}$. Otherwise, assume that $k=0$. Since $n\ge p+1$, $n\ge 2p>p+\frac{2}{p-3}$. Therefore, $h_1(n,p)> h(n,p)$. Consequently, $e(G)>h(n,p)$.

If $\delta(G)\ge p-1$, then $e(G)\ge\frac{(p-1)n}{2}\ge h(n,p)$. Suppose the equality holds. Then $G$ is a $(p-1)$-regular graph with $n\equiv 0\pmod{p}$. Since $n\ge p+1$, we have $n\ge 2p$ and $|V(G)\backslash N_G[u]|=n-p\ge p>2$.
Let $N_G(u)=\{u_1, u_2,\ldots, u_{p-1}\}$ and take $v_1, v_2\in V(G)\backslash N_G[u]$. By Claim~\ref{clm:KP+}, $G[N_G(u)\cap N_G(v_1)]$ contains a $K_{p-2}$-copy $M_1$. Assume that $V(M_1)=\{u_1, u_2,\ldots, u_{p-2}\}$.
 Since $G$ is $(p-1)$-regular, then for each $u_i\in V(M_1)$, we have $N_G[u_i]=V(M_1)\cup \{u, v_1\}$. Thus, $|N_G(u)\cap N_G(v_2)|\le |N_G(u)\backslash V(M_1)|=1$, which means $G[N_G(u)\cap N_G(v_2)]$ contains no $K_{p-2}$-copy, a contradiction to Claim~\ref{clm:KP+}.

Thus, if $G$ is connected, then $e(G)> h(n,p)$.
\begin{case}
$G$ is disconnected.
\end{case}

Suppose that $G_{1}, G_{2},\ldots, G_{s}\,(s\geq 2)$ are all components in $G$.
Assume that there exist $1\le i<j\le s$ such that $G_i$ and $G_j$ are all $K_p$-free.
Let $u\in V(G_i)$ and $v\in V(G_j)$. Then $G+uv$ has a $K_p^{+1}$-copy, say $M$, containing $uv$.
Since $uv$ is not contained in any triangle of $G+uv$, then $uv$ must be the pendent edge in $M$. Assume that $d_M(u)=p+1$ and $d_M(v)=1$. Then $u$ is a vertex in $K_p$ in $G$, which implies that $G_i$ contains a $K_p$-copy, a contradiction.
So, there is at most one component in $G$ being $K_p$-free.

Also, each component containing $K_p$ has exactly $p$ vertices, for otherwise $G$ contains a $K_p^{+1}$-copy, a contradiction.
So, we may assume that $G_i\cong K_p$ for each $i\in [s-1]$.
For the remaining component $G_s$ of order $n_s$, if $n_s\le p$, then $G_s \cong K_{n_s}$ and $e(G)= h(n,p)$.
If $n_s\ge p+1$, then $G_s$ is $K_p^{+1}$-saturated and $n_s\equiv k\pmod{p}$. Then by the conclusion of Case 1, we have
$$\begin{aligned}
e(G) &\ge \frac{(p-1)(n-n_s)}{2}+h_1(n_s,p) \\
& >\frac{(p-1)(n-n_s)}{2}+h(n_s,p)\\
& =\frac{(p-1)(n-n_s)}{2}+\frac{(p-1)n_s}{2}+\frac{k(k-p)}{2} \\
& = h(n,p).
\end{aligned}
$$
Summarizing above, if $p\ge 4$, then $e(G)\ge h(n,p)$ and the equality holds only when $n_s=k$ and $G=\lfloor\frac{n}{p}\rfloor K_p\cup K_k$.

This completes the proof.
\end{proof}


\section{Saturation number for $K_1\vee(K_2\cup qK_1)$ and extremal graphs}
In this section we will prove Theorem~\ref{thm:K2}. Recall the statement of Theorem~\ref{thm:K2}.

\noindent\textbf{Theorem~\ref{thm:K2}.}
For $q\ge 1$ and $n\ge q+3$, $\sat(n, K_1\vee(K_{2}\cup qK_1))=n-1$. Furthermore, if $q=1$, $\SAT(n, K_1\vee(K_2\cup qK_1))=\{S_n, aK_3\cup S_{n-3a}(n>3a, n-3a\neq 3)\}$. If $q\ge 2$, $\SAT(n, K_1\vee(K_2\cup qK_1))=\{S_n\}$.

\begin{proof}[\textbf{Proof of Theorem~\ref{thm:K2}}]
Obviously, $S_n$ is a $K_1\vee(K_2\cup qK_1)$-saturated graph with $n-1$ edges. So, $\sat(n, K_1\vee(K_2\cup qK_1))\le n-1$. Now we prove the lower bound.
Let $G$ be a minimum $\big(K_1\vee(K_2\cup qK_1)\big)$-saturated graph of order $n$.

If $G$ is connected, then $e(G)\ge n-1$. Assume the equality holds.
Then $G$ is a tree. Thus, for any $uv\in E(\overline{G})$, $uv$ is the edge in the $K_3$-copy of $K_1\vee(K_2\cup qK_1)$, which implies $\diam(G)=2$. Since $G$ is a tree and $\diam(G)=2$, the longest path in $G$ has length 2. Thus, $G\cong S_n$.

Now, suppose that $G$ is disconnected and $G_{1}, G_{2},\ldots, G_{s}\,(s\geq 2)$ are all components in $G$. If $\delta(G)\ge 2$, then $e(G)\ge n$. So, we assume that $u$ is a vertex in $G_1$ with $d_{G}(u)\le 1$.
Take $v\in V(G)\backslash V(G_1)$. Then $G+uv$ has a $\big(K_1\vee(K_2\cup qK_1)\big)$-copy, say $H$, containing $uv$. Since $uv$ is not contained in any triangle of $G+uv$, then $uv$ must be the pendent edge in $H$. As $d_{G+uv}(u)\le 2$, then $d_H(u)=1$ and $d_H(v)=q+2$. Then $d_G(v)= q+1$ for each $v\in V(G)\backslash V(G_1)$.
Let $|V(G_1)|=n_1$. Since $G_1$ is connected, then $e(G_1)\ge n_1-1$, and thus, $$e(G)\ge e(G_1)+\frac{1}{2}\sum\limits_{v\in V(G)\backslash V(G_1)}d_G(v)\ge (n_1-1)+\frac{(q+1)(n-n_1)}{2}
=n-1+\frac{(q-1)(n-n_1)}{2}\ge n-1.$$
Suppose the equality holds. Then $e(G_1)= n_1-1$, $q=1$ and $d_G(v)= 2$ for each $v\in V(G)\backslash V(G_1)$. Recall we have shown that $uv$ is the pendent edge in $H$. Since $d_{G}(u)\le 1$, $G[N_G[v]]$ contains a $K_3$-copy. Combining with $d_G(v)= 2$, $G[N_G[v]]\cong K_3$, and thus, $G_{2}\cup \cdots\cup G_{s}\cong (s-1)K_3$.
For the remaining component $G_1$, if $n_1\le 3$, then $G_1$ must be complete since $G$ is $K_1\vee(K_2\cup qK_1)$-saturated. By our assumption that $e(G_1)= n_1-1$, we have $G_1\cong K_1$ or $K_2$. If $n_1\ge 4$, $G_1$ must be a connected $\big(K_1\vee(K_2\cup qK_1)\big)$-saturated graph. As our proof before, $G_1\cong S_{n_1}$.

Summarizing above, $\sat(n, K_1\vee(K_2\cup qK_1))= n-1$. If $q\ge 2$, $S_n$ is the unique minimum  $\big(K_1\vee(K_2\cup qK_1)\big)$-saturated graph of order $n$. If $q=1$, $\SAT(n, K_1\vee(K_2\cup qK_1))=\{S_n, aK_3\cup S_{n-3a}(n>3a, n-3a\neq 3)\}$. This completes the proof of Theorem~\ref{thm:K2}.
\end{proof}
\section{Saturation number for $K_1\vee(2K_2\cup qK_1)$ and extremal graph}
In this section we will prove Theorem~\ref{thm:2K2}. Recall the statement of Theorem~\ref{thm:2K2}.

\noindent\textbf{Theorem~\ref{thm:2K2}.}
Let $q\ge 1$ and $n\ge q+5$. Then
\begin{enumerate}[(1)]
  \item $\sat(n, K_1\vee(2K_{2}\cup qK_1))=n+2$.
  \item $\SAT(n, K_1\vee(2K_{2}\cup qK_1))=\{K_4^{+(n-4)}\}$.
\end{enumerate}

For simplicity, let $H=K_1\vee(2K_2\cup qK_1)$.

\begin{proof}[\textbf{Proof of Theorem~\ref{thm:2K2} (1)}]
It can be checked that $K_4^{+(n-4)}$ is an $n$-vertex $H$-saturated graph with $n+2$ edges. So, $\sat(n, H)\le n+2$. Now we prove the lower bound.
Let $G$ be a minimum $H$-saturated graph of order $n$. Suppose on the contrary that $e(G)<n+2$, and we will work to get a contradiction.

Suppose that $G$ is disconnected and $G_{1}, G_{2},\ldots, G_{s}\,(s\geq 2)$ are all components in $G$. Since $n\ge 6$ and $e(G)\le n+1$, $\delta(G)\le 2$. Assume that $u$ is a vertex in $G_1$ with $d_G(u)\le 2$, and $v\in V(G)\backslash V(G_1)$. Then $G+uv$ has an $H$-copy containing $uv$. Since $uv$ is not contained in any triangle of $G+uv$, then $uv$ must be a pendent edge in $H$. Since $d_{G+uv}(u)\le 3$, then $d_H(u)=1$ and $d_H(v)=q+4$. Then $d_G(v)= q+3\ge 4$ for each $v\in V(G)\backslash V(G_1)$.
Let $|V(G_1)|=n_1$. As $d_G(v)\ge 4$, we have $n-n_1\ge 5$, and thus, $e(G)\ge e(G_1)+\frac{1}{2}\sum\limits_{v\in V(G)\backslash V(G_1)}d_G(v)\ge (n_1-1)+\frac{4(n-n_1)}{2}=2n-n_1-1> n+2$, a contradiction.

So, $G$ is connected. For any $e\in E(\overline{G})$, $G+e$ contains an $H$-copy. Then $G$ contains either $K_1\vee 2K_2$, or $K_1\vee (K_2\cup 3K_1)$ as a subgraph, see Figure~\ref{fig:2K2}.

\begin{figure}[H]
    \centering
\begin{tikzpicture}[scale=1.1]
\node[inner sep= 1.5pt](u) at (3,0)[circle,fill]{};
\node[inner sep= 1.5pt](v) at (4,0)[circle,fill]{};
\node[inner sep= 1.5pt](w) at (5,2)[circle,fill]{};
\node[inner sep= 1.5pt](x) at (6,0)[circle,fill]{};
\node[inner sep= 1.5pt](y) at (7,0)[circle,fill]{};
\draw (u) -- (v);
\draw (v) -- (w);
\draw (u) -- (w);
\draw (x) -- (y);
\draw (x) -- (w);
\draw (y) -- (w);

\node[inner sep= 1.5pt](u') at (9,0)[circle,fill]{};
\node[inner sep= 1.5pt](v') at (10,0)[circle,fill]{};
\node[inner sep= 1.5pt](w') at (11,2)[circle,fill]{};
\node[inner sep= 1.5pt](x') at (12,0)[circle,fill]{};
\node[inner sep= 1.5pt](y') at (13,0)[circle,fill]{};
\node[inner sep= 1.5pt](z') at (14,0)[circle,fill]{};
\draw (u') -- (v');
\draw (v') -- (w');
\draw (u') -- (w');
\draw (x') -- (w');
\draw (y') -- (w');
\draw (z') -- (w');
\draw (5,-1) node   {$(a)$};
\draw (11,-1) node {$(b)$};
\draw (5,2.2) node[anchor= west]  {$v$};
\draw (3,0) node[anchor= north west]  {$u_1$};
\draw (4,0) node[anchor= north west]  {$u_2$};
\draw (6,0) node[anchor= north west]  {$u_3$};
\draw (7,0) node[anchor= north west]  {$u_4$};
\draw (11,2.2) node[anchor= west]  {$v$};
\draw (9,0) node[anchor= north west]  {$u_1$};
\draw (10,0) node[anchor= north west]  {$u_2$};
\draw (12,0) node[anchor= north west]  {$u_3$};
\draw (13,0) node[anchor= north west]  {$u_4$};
\draw (14,0) node[anchor= north west]  {$u_5$};
\end{tikzpicture}
\caption{The local structure of $G$.}\label{fig:2K2}
\end{figure}
If $G$ contains $K_1\vee 2K_2$ as a subgraph, see Fig.~\ref{fig:2K2} (a). By our assumption that $e(G)\le n+1$, $vu_1u_2v$ and $vu_3u_4v$ are exactly two cycles in $G$. Then $u_1u_3\notin E(G)$.
However, $G+u_1u_3$ contains exactly these three triangles, and thus, $G+u_1u_3$ contains no $H$-copy, a contradiction.
So, $G$ contains no $(K_1\vee 2K_2)$-copy. Then $G$ contains $K_1\vee (K_2\cup 3K_1)$ as a subgraph, see Fig.~\ref{fig:2K2} (b).

Since $u_1u_2\in E(G)$, if $G[\{v, u_1, u_2, u_3\}]\cong K_4$, then at least 3 edges must be removed from $G$ to turn it into a tree. That means $e(G)\ge n+2$, a contradiction.
Thus, $u_1u_3\notin E(G)$ or $u_2u_3\notin E(G)$. Assume that $u_2u_3\notin E(G)$.
We first claim that $u_1u_3\notin E(G)$. Otherwise, $G[\{v, u_1, u_2, u_3\}]\cong K_4^-$, say $M_1$.
Since $e(G)\le n+1$ and $G$ is connected, only the edges contained in $M_1$ belong to cycles.
Then $G+u_2u_3$ contains no  $(K_1\vee 2K_2)$-copy, and thus, $G+u_2u_3$ contains no $H$-copy, a contradiction.

Thus, $u_1u_3\notin E(G)$. Since $G+u_2u_3$ contains an $H$-copy, and $G$ contains no $(K_1\vee 2K_2)$-copy, $u_2u_3$ must be an edge in a triangle $T$ of $H$ in $G+u_2u_3$.
We now claim that $V(T)\backslash\{u_2, u_3\}=\{v\}$. Otherwise, there exists a vertex $u\in V(G)\backslash\{v, u_1\}$ such that $uu_2, uu_3\in E(G)$. Then $G[\{v, u, u_1, u_2, u_3\}]$ is isomorphic to $C_5+vu_2$, say $M_2$.
Thus, no edges other than those in $M_2$ lie on any cycle. Otherwise, we need to delete at least 3 edges from $G$ to obtain a tree, which means $e(G)\ge n+2$, a contradiction.
Then $G+u_1u_3$ contains no  $(K_1\vee 2K_2)$-copy, and thus, $G+u_1u_3$ contains no $H$-copy, a contradiction.

So, $V(T)=\{v, u_2, u_3\}$. Since $T$ is a subgraph of $H$ in $G+u_2u_3$, $G$ contains a triangle $T_1$ such that $|V(T)\cap V(T_1)|=|\{v, u_2, u_3\}\cap V(T_1)|=1$.
Let $T_2=\{v, u_1, u_2\}$. 
Since $G$ is $(K_1\vee 2K_2)$-free, $V(T_1)\cap V(T_2)=\emptyset$ or  $|V(T_1)\cap V(T_2)|=2$.
If $V(T_1)\cap V(T_2)=\emptyset$, then $v, u_2\notin V(T_1)$, $V(T)\cap V(T_1)=\{u_3\}$ and we can assume that $V(T)=\{u_3, x, y\}$, see Fig.~\ref{fig:subcase} (a).
It can be seen that $u_2x\notin E(G)$ and $d(u_2,x)\ge 3$. Otherwise, $G$ cannot be transformed into a tree unless at least 3 edges are removed, which implies $e(G)\ge n+2$, a contradiction.
Then adding the edge $u_2x$ to $G$ results in no new triangle being formed. So, $G+u_2x$ contains no $H$-copy, a contradiction.
\begin{figure}[H]
    \centering
\begin{tikzpicture}[scale=0.9]
\node[inner sep= 1.5pt](v) at (3,2)[circle,fill]{};
\node[inner sep= 1.5pt](u_3) at (6,2)[circle,fill]{};
\node[inner sep= 1.5pt](u_1) at (2,0)[circle,fill]{};
\node[inner sep= 1.5pt](u_2) at (4,0)[circle,fill]{};
\node[inner sep= 1.5pt](x) at (5,0)[circle,fill]{};
\node[inner sep= 1.5pt](y) at (7,0)[circle,fill]{};
\draw (v) -- (u_3);
\draw (v) -- (u_1);
\draw (v) -- (u_2);
\draw (u_1) -- (u_2);
\draw (x) -- (u_3);
\draw (y) -- (u_3);
\draw (x) -- (y);

\draw (3,2) node[anchor= north west]  {$v$};
\draw (2,0) node[anchor= north west]  {$u_1$};
\draw (4,0) node[anchor= north west]  {$u_2$};
\draw (6,2) node[anchor= north west]  {$u_3$};
\draw (5,0) node[anchor= north west]  {$x$};
\draw (7,0) node[anchor= north west]  {$y$};
\draw (6,0.8) node   {$T_1$};
\draw (3,0.8) node   {$T_2$};
\node[inner sep= 1.5pt](v') at (11.25,2)[circle,fill]{};
\node[inner sep= 1.5pt](x') at (9,0)[circle,fill]{};
\node[inner sep= 1.5pt](u'_1) at (10.5,0)[circle,fill]{};
\node[inner sep= 1.5pt](u'_2) at (12,0)[circle,fill]{};
\node[inner sep= 1.5pt](u'_3) at (13.5,0)[circle,fill]{};
\draw (x') -- (v');
\draw (u'_1) -- (v');
\draw (u'_2) -- (v');
\draw (u'_3) -- (v');
\draw (x') -- (u'_1);
\draw (u'_2) -- (u'_1);
\draw (4.5,-1) node   {$(a)$};
\draw (11,-1) node {$(b)$};
\draw (11.25,2) node[anchor= west]  {$v$};
\draw (9,0) node[anchor= north west]  {$x$};
\draw (10.5,0) node[anchor= north west]  {$u_1$};
\draw (12,0) node[anchor= north west]  {$u_2$};
\draw (13.5,0) node[anchor= north west]  {$u_3$};
\draw (10,0.4) node   {$T_1$};
\draw (11.25,0.4) node   {$T_2$};

\node[inner sep= 1.5pt](v'') at (17.75,1)[circle,fill]{};
\node[inner sep= 1.5pt](x'') at (15.25,1)[circle,fill]{};
\node[inner sep= 1.5pt](u''_1) at (16.5,2)[circle,fill]{};
\node[inner sep= 1.5pt](u''_2) at (16.5,0)[circle,fill]{};
\node[inner sep= 1.5pt](u''_3) at (19,1)[circle,fill]{};
\draw (u''_1) -- (v'');
\draw (u''_2) -- (v'');
\draw (u''_3) -- (v'');
\draw (x'') -- (u''_2);
\draw (x'') -- (u''_1);
\draw (u''_2) -- (u''_1);
\draw (4.5,-1) node   {$(a)$};
\draw (11,-1) node {$(b)$};
\draw (17,-1) node {$(c)$};
\draw (17.75,1) node[anchor= north west]  {$v$};
\draw (15.25,1) node[anchor= north east]  {$x$};
\draw (16.5,2) node[anchor= west]  {$u_1$};
\draw (16.5,0) node[anchor= north west]  {$u_2$};
\draw (19,1) node[anchor= north west]  {$u_3$};
\draw (16.1,1) node   {$T_1$};
\draw (16.9,1) node   {$T_2$};
\end{tikzpicture}
\caption{The case of $|V(T)\cap V(T_1)|=1$.}\label{fig:subcase}
\end{figure}

Thus, $|V(T_1)\cap V(T_2)|=2$. Recall that $|\{v, u_2, u_3\}\cap V(T_1)|=1$, then $vu_2\notin E(T_1)$. So, $V(T_1)\cap V(T_2)=\{v, u_1\}$ or $V(T_1)\cap V(T_2)=\{u_1, u_2\}$. Assume that $V(T_1)\cap V(T_2)=\{v, u_1\}$, as shown in Fig.~\ref{fig:subcase} (b). Since $e(G)\le n+1$, $u_2x\notin E(G)$. Since $G+u_2x$ contains no edge-disjoint triangles,  $G+u_2x$ contains no $H$-copy, a contradiction.
If $V(T_1)\cap V(T_2)=\{u_1, u_2\}$(see Fig.~\ref{fig:subcase} (c)), since $e(G)\le n+1$, $vx\notin E(G)$. But $G+vx$ contains no edge-disjoint triangles, and thus, $G+vx$ contains no $H$-copy, a contradiction.

Summarizing above, $e(G)\ge n+2$, and $\sat(n,H)=n+2$.
\end{proof}
\begin{proof}[\textbf{Proof of Theorem~\ref{thm:2K2} (2)}]Let $G$ be a minimum $H$-saturated graph of order $n$. By Theorem~\ref{thm:2K2} (1), $e(G)= n+2$. Now we prove that $G= K_4^{+(n-4)}$.

\setcounter{claim}{0}
\begin{claim}\label{clm:Delta}
 $\Delta(G)\ge q+3$.
\end{claim}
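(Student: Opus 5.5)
We want to show $\Delta(G)\ge q+3$ for a minimum $H$-saturated graph $G$ on $n\ge q+5$ vertices with $e(G)=n+2$, where $H=K_1\vee(2K_2\cup qK_1)$ has a center of degree $q+4$. The natural route is to argue by contradiction: assume $\Delta(G)\le q+2$ and examine what adding a single non-edge can produce.

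\textbf{Step 1: the new edge must be a spoke.} Let $e=xy\notin E(G)$ and let $M$ be an $H$-copy in $G+e$ containing $e$. The center of $M$ has degree $q+4$ in $M$, hence degree at least $q+4$ in $G+e$. Since adding $e$ raises degrees by at most one, the center must be an endpoint of $e$, say $x$, with $d_G(x)=q+3$. This already contradicts $\Delta(G)\le q+2$, which finishes the claim. Indeed, every vertex of $M$ other than the center has degree at most $2$ in $M$, so only the center needs degree at least $q+4$.

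\textbf{Step 2: non-edges exist.} For Step 1 to apply, $G$ must have a non-edge. If $G$ were complete, then $e(G)=\binom{n}{2}>n+2$ for $n\ge 6$, a contradiction. Hence $E(\overline{G})\neq\emptyset$, and Step 1 gives a vertex of degree $q+3$ in $G$.

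The only real subtlety is checking that the center of $H$ is the unique vertex of $H$ with degree above $2$. That is what forces the added edge to touch the center, so the argument is essentially a degree count.

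As a byproduct, every non-edge $xy$ of $G$ has an endpoint $x$ with $d_G(x)=q+3$, and $x$ is the center of the resulting $H$-copy. This observation is likely useful later in the paper, but it is not needed for the claim itself.
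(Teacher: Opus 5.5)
Your argument is correct and matches the paper's: if $\Delta(G)\le q+2$ then no vertex of $G+e$ reaches degree $q+4$. A non-edge exists because $G$ is not complete (you use $e(G)=n+2<\binom{n}{2}$, while the paper uses that $K_n$ with $n\ge q+5$ would contain $H$). One caution: outside the contradiction hypothesis, your Step 1 assertion that the center must be an endpoint of $e$, and your ``byproduct'', are false. The center of the $H$-copy can be a vertex that already has degree at least $q+4$ in $G$: adding an edge between two leaves of $K_4^{+(n-4)}$ creates $H$ centered at the full-degree vertex, and both endpoints have degree $1$. So in general you only get some vertex of degree \emph{at least} $q+3$, which is all the claim needs.
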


\begin{proof}
Since $n\ge q+5$, $G$ is not complete. Suppose that $\Delta(G)\le q+2$. Then for any $e\in E(\overline{G})$, $G+e$ contains no vertex of degree at least $q+4$. Consequently, $G+e$ has no $H$-copy,  a contradiction.
\end{proof}

\begin{claim}\label{clm:delta=2}
If $\delta(G)=2$, then there exist two vertices in $G$ of degree at least $q+3$.
\end{claim}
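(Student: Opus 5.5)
Let $w$ be a vertex with $d_G(w)=2$, and let $N_G(w)=\{a,b\}$. I will show that both $a$ and $b$ have degree at least $q+3$. This gives the two required vertices.

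\textbf{Non-neighbours of $w$.} Since $n\ge q+5$, there is a vertex $x\notin N_G[w]$. By saturation, $G+wx$ contains a copy $H'$ of $H$ using the edge $wx$. In $G+wx$ the degree of $w$ is $3$, while the centre of $H$ has degree $q+4\ge 5$. So $w$ is not the centre of $H'$, and $wx$ is either a pendent edge of $H'$ or a non-central edge of one of the two triangles of $H'$.
\begin{itemize}
\item If $wx$ is pendent with $w$ as the leaf, then $x$ is the centre. Hence $d_G(x)\ge q+3$.
\item If $x$ is the leaf, then $w$ would be the centre, which is impossible.
\item If $wx$ lies in a triangle $\{c,w,x\}$ of $H'$ with centre $c$, then $c\in\{a,b\}$, since $c$ must be adjacent to $w$. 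Then $d_G(c)\ge q+4-0\ge q+3$, because $cw$ and $cx$ are edges of $G$. Both triangle vertices other than the centre have degree $2$ in $H$, so no degree is forced on $w$ or $x$.
\end{itemize}
So every non-neighbour $x$ of $w$ either has degree at least $q+3$ itself, or has a common neighbour in $\{a,b\}$ with $w$ and that common neighbour has degree at least $q+3$.

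\textbf{Case where some $x$ is a centre.} Suppose $x\notin N_G[w]$ serves as the centre, so $d_G(x)\ge q+3$. Then $G$ contains the edges from $x$ to $q+3$ vertices, including the copy of $2K_2$ inside $N_G(x)$. Using Claim~\ref{clm:Delta}, I look for a second vertex of high degree. I add an edge between $w$ and another vertex $y$ not adjacent to $w$, chosen as a non-neighbour of $x$ if possible. If $y$ also acts as a centre, then $y$ is a second vertex of degree at least $q+3$. Otherwise the centre is $a$ or $b$, and the claim follows as long as that centre is distinct from $x$. This holds since $x\notin\{a,b\}$.

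\textbf{Case where all non-neighbours are routed through $\{a,b\}$.} Suppose instead that every $x$ is handled through the triangle case. Then each $x\notin N_G[w]$ is adjacent to $a$ or $b$, the corresponding centre has degree at least $q+3$, and $\{a,b\}$ dominates the rest of the graph. If both $a$ and $b$ occur as centres, we are done. Otherwise, say only $a$ occurs, with $d_G(a)\ge q+3$.
\begin{itemize}
\item If $b$ is adjacent to $a$, I use the edge count $e(G)=n+2$ to bound the cycle rank by $3$.
\item Then I add an edge $bz$, where $z$ is a far leaf-like vertex that is not adjacent to $b$. Such a $z$ exists because $d_G(b)<q+3$ forces many non-neighbours.
\item The copy of $H$ created must have its centre at a vertex of degree at least $q+3$ in $G$, or at $z$ or $b$. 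A centre at $z$ or $b$ gives the second high-degree vertex. A centre at $a$ is excluded because $a$ already has the pair $bz$ in its neighbourhood only when $z\sim a$, and this requires two vertex-disjoint edges in $N(a)$, which already exist, so $H\subseteq G$, a contradiction.
\end{itemize}

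\textbf{Main obstacle.} The last subcase is the hard part: only one candidate centre appears, and $\{a,b\}$ dominates. I expect to need the edge bound $e(G)=n+2$, which says cyclomatic number $3$, to control how many triangles sit inside $N(a)$. If $N(a)$ already contains $2K_2$ together with $q$ further vertices, then $H\subseteq G$, which contradicts $G$ being $H$-free. Hence $N(a)$ must lack two disjoint edges, and a suitable added edge forces a different centre.
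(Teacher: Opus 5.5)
Your analysis of a non-neighbour $x$ of $w$ is correct. You omit the role where $x$ is the centre and $w$ a triangle vertex, but the conclusion $d_G(x)\ge q+3$ is the same. Your first case is also correct: a second non-neighbour $y\ne x$ exists since $n-3\ge 2$, and the centre of the copy in $G+wy$ lies in $\{y,a,b\}$, hence differs from $x$. This does not give what you announced, namely that $a$ and $b$ both have large degree, but the claim does not need that. What remains is the case where every such copy is centred at one vertex $a$, so $a$ is adjacent to all vertices except possibly $b$. This is exactly the configuration the paper's proof deals with, and your treatment of it has a genuine gap.

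You exclude $a$ as the centre of the copy of $H$ in $G+bz$ by claiming this would need two disjoint edges already inside $N_G(a)$. That is false. The new edge $bz$ can itself be one of the two disjoint edges in $N_{G+bz}(a)$, so a single edge of $G-a$ avoiding $b$ and $z$ suffices. This is precisely how $K_2\vee\overline{K_{n-2}}$ is $H$-saturated. You also never handle $ab\notin E(G)$, and the remark about $e(G)=n+2$ and cycle rank is never turned into an argument.

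The missing idea is to use $\delta(G)=2$ at every vertex, not only at $w$. Suppose $a$ is the only vertex of degree at least $q+3$.
\begin{enumerate}[(1)]
\item Any $y\ne a$ has $d_G(y)\le q+2<n-2$, hence a non-neighbour $y'\ne a$.
\item In $G+yy'$ no vertex other than $a$ can reach degree $q+4$, so the copy of $H$ is centred at $a$ and $ay\in E(G)$.
\item Thus $a$ is universal, which also settles $ab$.
\item Since $\delta(G)=2$, every vertex of $G-a$ has a neighbour in $G-a$.
\item Since $G$ is $H$-free and $|N_G(a)|=n-1\ge q+4$, $G-a$ contains no two disjoint edges.
\item A graph on $n-1\ge 5$ vertices with no isolated vertex and no $2K_2$ is a star. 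Its centre has degree $n-1\ge q+3$ in $G$, a contradiction.
\end{enumerate}
This is the paper's argument, and it makes your case analysis around $w$ unnecessary.
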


\begin{proof} By Claim~\ref{clm:Delta},  $\Delta(G)\ge q+3$.
Assume that $v$ is the unique vertex in $G$ such that $d_G(v)\ge q+3$. Recall our assumption that $n\ge q+5$. Then for any $w\in V(G)\backslash \{v\}$, there exist one vertex in $G-v$ not adjacent to $w$, say $w'$. Since $G$ is $H$-saturated, $G+ww'$ contains an $H$-copy containing $ww'$. Combining with the fact that $H$ contains a $(q+4)$-degree vertex, and $v$ is the unique vertex in $G$ such that $d_G(v)\ge q+3$, $v\in V(H)$ and $d_H(v)=q+4$. Then each vertex in $H$ is adjacent to $v$, which means $wv\in E(G)$. As $w$ is arbitrarily chosen, $d_G(v)=n-1$.

As $\delta(G)=2$, for any $w\in V(G)\backslash \{v\}$, $w$ has another neighbour other than $v$. However, $G$ is $H$-free, it implies that $G[N_G(v)]$ has no $2K_2$-copy, and $G[N_G(v)]\cong S_{n-1}$. Thus, $G$ contains  two vertices  of degree at least $q+3$, a contradiction to our assumption.
\end{proof}

\begin{claim}\label{clm:delta}
$\delta(G)=1$.
\end{claim}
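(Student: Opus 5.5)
The plan is to rule out, in turn, $\delta(G)=0$, $\delta(G)\ge 3$ and $\delta(G)=2$, using only degree counting together with $e(G)=n+2$ (Theorem~\ref{thm:2K2} (1)), $n\ge q+5\ge 6$, and Claim~\ref{clm:delta=2}.

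\emph{$\delta(G)\ge 1$.} Suppose $u$ is an isolated vertex and let $v\neq u$ be arbitrary. I will use the same pendent-edge argument as in the disconnected case of part (1). Then $G+uv$ contains an $H$-copy through $uv$. Since $uv$ lies in no triangle and $d_{G+uv}(u)=1$, the edge $uv$ is a pendent edge of $H$ with $d_H(v)=q+4$. Hence $d_G(v)\ge q+3\ge 4$ for every $v\neq u$. This gives $e(G)\ge 2(n-1)>n+2$ for $n\ge 6$, a contradiction.

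\emph{$\delta(G)\le 2$.} If $\delta(G)\ge 3$, then $e(G)\ge 3n/2>n+2$ because $n>4$, again a contradiction.

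\emph{Ruling out $\delta(G)=2$.} This is the main case. Assume $\delta(G)=2$. By Claim~\ref{clm:delta=2}, there are two vertices $v_1,v_2$ with $d_G(v_i)\ge q+3$. Counting excess degree,
$$\sum_{w\in V(G)}\bigl(d_G(w)-2\bigr)=2e(G)-2n=4 .$$
The vertices $v_1,v_2$ alone contribute at least $2(q+1)\ge 4$. Therefore $q=1$, $d_G(v_1)=d_G(v_2)=4$, and every other vertex has degree exactly $2$.

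Now take any non-edge $ww'$ with $w,w'\notin\{v_1,v_2\}$. In $G+ww'$ every vertex still has degree at most $4<q+4=5$, so $G+ww'$ has no vertex of degree $5$ and thus contains no $H$-copy. This contradicts saturation. Hence $V(G)\setminus\{v_1,v_2\}$ induces a clique. Every vertex in that clique has degree $2$, so the clique has at most $3$ vertices. This forces $n\le 5$, contradicting $n\ge q+5=6$.

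Combining the three parts gives $\delta(G)=1$. I expect the $\delta=2$ case to be the only step needing care: the key point is that the excess-degree count pins down $q=1$ and all degrees, after which the "no vertex of degree $q+4$ after adding the edge" observation from Claim~\ref{clm:Delta} finishes the argument immediately.
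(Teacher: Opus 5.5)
Your proof is correct and follows the same outline as the paper: $\delta(G)=0$ is excluded by the pendent-edge argument, $\delta(G)\ge 3$ by edge counting, and $\delta(G)=2$ via Claim~\ref{clm:delta=2} plus adding an edge between two non-adjacent degree-$2$ vertices. The only difference is in this last case. Your identity $\sum_{w}(d_G(w)-2)=4$ pins down $q=1$ and the entire degree sequence at once, so a bare maximum-degree check finishes. The paper instead splits on whether there are at most three degree-$2$ vertices, and otherwise shows the added edge must be a $2K_2$-edge of $H$, forcing a common neighbour of degree at least $q+4$ and hence too many edges; your version is slightly more economical.
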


\begin{proof}
Since $e(G)=n+2$ and $n\ge q+5$, $\delta(G)\le 2$.
Suppose first that $\delta(G)=0$ and $u$ is a vertex in $G$ with $d_G(u)=0$.
Then for any $v\in V(G)\backslash \{u\}$, $G+uv$ has an $H$-copy containing $uv$. Since $d_{G+uv}(u)=1$, $uv$ must be a pendant edge in $H$ with $d_H(v)=q+4$. As $n\ge q+5$, $d_G(v)=q+3$ and $$e(G)= \frac{1}{2}\sum\limits_{x\in V(G)}d_G(x)=\frac{1}{2}(q+3)(n-1)> n+2,$$ a contradiction. So, $\delta(G)\ge 1$.

Suppose now that $\delta(G)=2$.
Assume that there exist at most three vertices in $G$ of degree 2. Since $\delta(G)=2$, by Claim~\ref{clm:delta=2}, there exist two vertices in $G$ of degree at least $q+3$.
Combining with $q\ge 1$ and $n\ge q+5$, we have
$$2e(G)= \sum\limits_{x\in V(G)}d_G(x)\ge 6+2(q+3)+3(n-5)=3n+2q-3> 2n+4,$$ a contradiction.

Thus, $G$ contains four 2-degree vertices. Then there exist two non-adjacent 2-degree vertices in $G$, say $u_1$ and $u_2$. Since $G+u_1u_2$ has an $H$-copy containing $u_1u_2$, and $d_{G+u_1u_2}(u_1)=d_{G+u_1u_2}(u_2)=3$, $d_H(u_1)=d_H(u_2)=2$. Thus, $u_1$ and $u_2$ has a common neighbour $v$ with $d_G(v)\ge q+4$. By Claim~\ref{clm:delta=2}, $G$ has two vertices of degree at least $q+3$. Thus, $$2e(G)= \sum\limits_{x\in V(G)}d_G(x)\ge 2(n-2)+(q+3)+(q+4)=2n+2q+3> 2n+4,$$ a contradiction.

Summarizing above, $\delta(G)=1$.
\end{proof}
Let $u$ be a vertex of $G$ with $d_G(u)=\delta(G)=1$, and $v$ be the unique neighbour of $u$. Let $V_1=N_G(v)\backslash\{u\}$ and $V_2=V(G)\backslash (V_1\cup\{u, v\})$.

\begin{claim}\label{clm:dv}
$d_G(v)\ge q+4$.
\end{claim}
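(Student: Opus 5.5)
We want to show $d_G(v)\ge q+4$, where $u$ is a leaf with unique neighbour $v$. The idea is to add an edge at $u$ and look at where the resulting $H$-copy can sit.

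Since $n\ge q+5$ and $d_G(u)=1$, there is a vertex $w\notin N_G[u]$, so $uw\in E(\overline{G})$. Then $G+uw$ contains an $H$-copy $M$ with $uw\in E(M)$. In $G+uw$ the vertex $u$ has degree $2$, with neighbours $v$ and $w$.

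In $H$, every vertex other than the center has degree $1$ or $2$; the degree-$2$ vertices lie in triangles with the center, and the leaves are pendent at the center. This leaves two cases for the role of $u$ in $M$.

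\emph{Case 1: $uw$ is a pendent edge of $M$.} Here one endpoint of $uw$ is the center, of degree $q+4$ in $M$. The center cannot be $u$, since $d_{G+uw}(u)=2<q+4$. So the center is $w$ and $u$ is a leaf of $M$. Then $M-u$ is a subgraph of $G$ in which $w$ has degree $q+3$ and lies in two triangles sharing only $w$. We cannot conclude anything about $v$ directly here, so we use the freedom in choosing $w$.

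\emph{Case 2: $uw$ lies in a triangle of $M$.} The triangle must be $uvw$, with $vw\in E(G)$. The center is $v$ or $w$; it is not $u$, since $d(u)=2$. If the center is $v$, then $d_G(v)\ge d_M(v)=q+4$ and we are done, because the edge $vu$ already belongs to $G$. If the center is $w$, then $w$ has degree at least $q+4$ in $G+uw$, so at least $q+3$ in $G$.

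The main obstacle is therefore excluding the situation where, for every non-neighbour $w$ of $u$, the center of $M$ is $w$ rather than $v$. In that situation every vertex $w\in V(G)\setminus\{u,v\}$ has $d_G(w)\ge q+3\ge 4$. The degree sum then gives
$$2e(G)\ge 1+d_G(v)+4(n-2).$$
Since $e(G)=n+2$, this requires $2n+4\ge 4n-7+d_G(v)$, which fails for $n\ge 6$. Hence some non-neighbour $w$ of $u$ does not serve as the center, which forces the center to be $v$, that is, $d_G(v)\ge q+4$.

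The argument can be tightened by choosing $w$ to be a vertex of minimum degree in $V(G)\setminus\{u,v\}$. That degree is at most $2$ (otherwise the degree sum exceeds $2n+4$), so $w$ cannot be the center, and Case 2 with center $v$ is forced.
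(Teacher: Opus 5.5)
Your main argument is correct and is essentially the paper's proof. The paper assumes $d_G(v)\le q+3$ and notes that for every $w\neq u,v$ the $H$-copy in $G+uw$ must then be centered at $w$, since otherwise $uvw$ is a triangle centered at $v$. This gives $d_G(w)\ge q+3$ for all such $w$ and the same degree-sum contradiction with $e(G)=n+2$; you run the identical pendent/triangle case split directly rather than by contradiction.

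Only your closing \emph{tightened} remark is off. If every $w\in V(G)\setminus\{u,v\}$ has degree at least $3$, the degree sum only gives $n+d_G(v)\le 9$, which is no contradiction when $n\le 8$, so the bound ``at most $2$'' is not justified. The fix is to bound the minimum degree there by $q+2$, which your main argument already establishes; then $d_{G+uw}(w)\le q+3<q+4$, so $w$ cannot be the center.
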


\begin{proof}
Assume that $d_G(v)\le q+3$. For any $w\in V(G)\backslash\{u,v\}$, $G+uw$ has an $H$-copy containing $uw$.
We claim that $d_{H}(w)=q+4$. Otherwise, $d_{H}(u)= d_{H}(w)=2$ since $d_{G+uw}(u)= 2$. Then $u$ and $w$ has a common neighbour with degree at least $q+4$ in $G$. Since $v$ is the unique neighbour of $u$, $d_{H}(v)=q+4=d_{G}(v)$, a contradiction to our assumption. Thus, $d_{H}(w)=q+4$.
So, $d_G(w)\ge q+3$ and $$2e(G)= \sum\limits_{x\in V(G)}d_G(x)\ge (q+3)(n-2)+2=(q+3)n-2q-4> 2n+4,$$ a contradiction.
\end{proof}

\begin{claim}\label{clm:v1}
$\sum\limits_{x\in V_1}d_G(x)\ge |V_1|+6$.
\end{claim}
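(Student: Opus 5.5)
The plan is to rewrite the claimed inequality as a statement about the edges around $V_1$. Every $x\in V_1$ is adjacent to $v$, and $u$ has no neighbour in $V_1$. Hence
$$\sum_{x\in V_1}d_G(x)=|V_1|+2e(G[V_1])+e(V_1,V_2),$$
and it suffices to show $2e(G[V_1])+e(V_1,V_2)\ge 6$.

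\textbf{Structural facts.} By Claim~\ref{clm:dv}, $d_G(v)\ge q+4$, so $|V_1|\ge q+3\ge 4$.
\begin{itemize}
\item $G[V_1]$ contains no $2K_2$. Otherwise $v$, the two disjoint edges in $V_1$, and $q$ further neighbours of $v$ (available since $d_G(v)\ge q+4$) would give an $H$-copy in $G$. So the edges of $G[V_1]$ form a star or a triangle.
\item Since $e(G)=n+2$ and $G$ is connected, $G$ has cyclomatic number $3$. Deleting any $3$ edges that lie on cycles therefore leaves a tree.
\end{itemize}

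\textbf{Case analysis on $e(G[V_1])$.} If $e(G[V_1])\ge 3$, we are done.

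Suppose $e(G[V_1])\le 2$. Since $|V_1|\ge 4$, there are non-adjacent pairs $x,y\in V_1$ for which $v$ together with $G[V_1]+xy$ still contains no $2K_2$ in $N(v)$. For instance, if $G[V_1]$ is a star, take $x,y$ to be two leaves of that star, or two vertices outside it. Adding $xy$ creates the triangle $vxy$, but no copy of $H$ can be centred at $v$, since $G[N(v)]+xy$ still has no $2K_2$.

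\begin{itemize}
\item \emph{Centre at a vertex of $V_2$.} The centre $c$ of the $H$-copy cannot lie in $V_2$, because $vxy$ is a triangle of the copy and the centre must lie in every triangle of the copy.
\item \emph{Centre at $x$ or $y$.} So the centre is $x$ (say). Then $x$ lies in a second triangle $xab$ of $G$ that is edge-disjoint from $vxy$, and $d_G(x)\ge q+3$. If $a,b\ne v$, this second triangle, together with the $q$ further neighbours of $x$ that do not lie in $V_1\cup\{v\}$, forces $x$ to send several edges to $V_2$.
\end{itemize}

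Applying this to two different admissible pairs should give at least two such "hub" vertices in $V_1$. Alternatively, it gives one hub $x$ with $d_G(x)\ge q+3\ge 4$ whose neighbours outside $v$ and $V_1$ contribute at least $3$ to $e(V_1,V_2)$. Combined with the edges in $G[V_1]$, this yields $2e(G[V_1])+e(V_1,V_2)\ge 6$.

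\textbf{Subcases.} The subcases $e(G[V_1])=0,1,2$ are handled separately.
\begin{itemize}
\item When $e(G[V_1])=0$: every pair in $V_1$ is admissible. Choosing three vertices $x,y,z\in V_1$ and considering the pairs $xy$, $xz$, $yz$ forces at least two of them to be hubs.
\item When $e(G[V_1])=1$ or $2$: one already has a contribution of $2$ or $4$, and needs only one or two extra edges to $V_2$. Here the triangle $xab$ found above helps directly, since it either gives an edge of $G[V_1]$ or an edge into $V_2$.
\end{itemize}

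\textbf{Main obstacle.} The delicate point is the case where the second triangle at the hub passes through $v$, i.e.\ $a=v$ and $b\in V_1$. In that case the second triangle contributes no edge to $V_2$. I would rule this out by combining two facts: $G[V_1]$ has no $2K_2$, and the cycle-rank bound (at most $3$ independent cycles). If $G[V_1]$ already has two edges and the hub also has many $V_2$-neighbours lying on cycles, the cycle rank exceeds $3$. Conversely, if the hub's $V_2$-neighbours are pendant, one can add a suitable non-edge that creates no new pair of edge-disjoint triangles, contradicting saturation.
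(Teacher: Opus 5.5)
\textbf{There is a genuine gap.} Your identity $\sum_{x\in V_1}d_G(x)=|V_1|+2e(G[V_1])+e(V_1,V_2)$ is correct, and so is the observation that $G[V_1]$ is $2K_2$-free. The core step fails, however.

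When you add $xy$ with $x,y\in V_1$, you assert that $vxy$ is a triangle of the resulting $H$-copy, and conclude that the centre cannot lie in $V_2$. Nothing forces this, because the copy need not use $v$ at all. Suppose $c\in V_2$ is a common neighbour of $x$ and $y$ with $d_G(c)\ge q+4$, and $G[N_G(c)]$ has an edge avoiding $x,y$. Then $xy$ is a $2K_2$-edge of a copy centred at $c$. The edge $xy$ could also be a pendant edge of the copy. So a centre in $V_2$ is a real possibility, and ruling it out would need exactly the cycle-rank counting you only gesture at.

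The rest is not yet an argument either:
\begin{itemize}
\item In the star case, taking $x,y$ ``outside'' the star creates a $2K_2$ with a star edge. The copy may then be centred at $v$, and you learn nothing.
\item For $e(G[V_1])=1$ you need $e(V_1,V_2)\ge 4$, not one or two extra edges.
\item ``Should give at least two hub vertices'' and ``I would rule this out by\ldots'' describe a plan, not a proof.
\item Cyclomatic number $3$ does not mean that deleting \emph{any} three cycle edges leaves a tree.
\end{itemize}

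The missing idea is to use the leaf $u$ instead of non-edges inside $V_1$. Argue by contradiction from $\sum_{x\in V_1}d_G(x)\le |V_1|+5$. This gives $e(G[V_1])\le 2$ and tight degree bounds.

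For $w\in V_1$, add $uw$. Since $d_{G+uw}(u)=2$ and $N_G(u)=\{v\}$, only two kinds of copy are possible:
\begin{itemize}
\item a copy centred at $v$ with $uw$ as a $2K_2$-edge, which needs an edge of $G[V_1]$ avoiding $w$;
\item a copy centred at $w$, which needs $d_G(w)\ge q+3$ and a $2K_2$ inside $N_G(w)\cup\{u\}$ in $G+uw$.
\end{itemize}
This pins the copy down completely, and each case for $e(G[V_1])$ closes quickly:
\begin{itemize}
\item If $e(G[V_1])=0$, every $w\in V_1$ must have degree at least $3$. Hence the sum is at least $3|V_1|>|V_1|+5$, since $|V_1|\ge 4$.
\item If the only edge is $w_1w_2$, the degree bound gives, say, $d_G(w_1)\le q+2$. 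Adding $uw_1$ then leaves no $2K_2$ in $N(v)$.
\item If the edges form a path $w_1ww_2$, the copy must be centred at $w$. The degree bound forces $d_G(w)=4$, $q=1$ and $d_G(w_1)=d_G(w_2)=2$. So $G[N_G(w)]\cong P_3\cup K_1$, and even with $u$ added there is no room for a second $K_2$.
\end{itemize}
No cycle-rank argument is needed.
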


\begin{proof}
Otherwise, $\sum\limits_{x\in V_1}d_G(x)\le |V_1|+5$. Since $V_1\subseteq N_G(v)$, we have $e(G[V_1])\le 2$.

We consider the following three cases.
\setcounter{case}{0}
\begin{case}
$e(G[V_1])=0$.
\end{case}

Then we claim that each vertex in $V_1$ has degree at least 3. Otherwise, there exists one vertex $w\in V_1$ such that $d_G(w)\le 2$. Since $uw\notin E(G)$, $G+uw$ has an $H$-copy containing $uw$. Since $d_{G+uw}(u)=2$ and $d_{G+uw}(w)\le 3$, then $d_H(u)=d_H(w)=2$. As $N_G(u)\cap N_G(w)=\{v\}$, $v\in V(H)$ and $d_H(v)=q+4$. That means $e(G[V_1])\ge 1$, a contradiction.

So, $\sum\limits_{x\in V_1}d_G(x)\ge 3|V_1|$. By Claim~\ref{clm:dv}, $|V_1|=d_G(v)-1\ge q+3\ge 4$. Then $\sum\limits_{x\in V_1}d_G(x)>|V_1|+5$, a contradiction to our assumption.

\begin{case}
$e(G[V_1])=1$.
\end{case}

Suppose that $w_1$ and $w_2$ are two adjacent vertices in $V_1$. Then $d_G(w_1)\le q+2$ or $d_G(w_2)\le q+2$. Otherwise, since each vertex in $V_1$ is adjacent to $v$, $\sum\limits_{x\in V_1}d_G(x)\ge 2(q+3)+|V_1|-2>|V_1|+5$, a contradiction.
So, we can assume that $d_G(w_1)\le q+2$. Since $uw_1\notin E(G)$, $G+uw_1$ has an $H$-copy containing $uw_1$. Since $d_{G+uw_1}(u)=2$ and $d_{G+uw_1}(w_1)< q+4$, then $d_H(u)=d_H(w_1)=2$. As $N_G(u)\cap N_G(w_1)=\{v\}$, $v\in V(H)$ and $d_H(v)=q+4$. However, there exists no $2K_2$-copy in $(G+uw_1)[V_1\cup \{u\}]$.
Then $G+uw_1$ has no $H$-copy, a contradiction.
\begin{case}
$e(G[V_1])=2$.\end{case}
Since $G$ is $H$-free, and $d_G(v)\ge q+4$ by Claim~\ref{clm:dv}, $G[V_1]$ contains no $2K_2$-copy.
Then two edges in $G[V_1]$ share one vertex. Assume that $E(G[V_1])=\{ww_1,ww_2\}$. Since $G+uw$ has an $H$-copy containing $uw$, and $d_{G+uw}(u)=2$, $d_H(w)= 2$ or  $d_H(w)=q+4$.

If $d_H(w)=2$, since $N_G(u)\cap N_G(w)=\{v\}$, $v\in V(H)$ and $d_H(v)=q+4$. However, all edges in $(G+uw)[V_1\cup \{u\}]$ share one vertex, $G+uw$ contains no $H$-copy, a contradiction. Thus, $d_H(w)=q+4$. Then $d_G(w)\ge q+3\ge 4$. Since $$d_G(w)\le\sum\limits_{x\in V_1}d_G(x)-d_G(w_1)-d_G(w_2)-(|V_1|-3)\le (|V_1|+5)-2-2-(|V_1|-3)=4,$$ then $d_G(w)=q+3=4$, $q=1$ and $d_G(w_1)=d_G(w_2)=2$. Then $G[N_G(w)]\cong P_3\cup K_1$, which means adding an edge $uw$ produces no $H$-copy with $d_H(w)=q+4$, a contradiction.

Summarizing above, $\sum\limits_{x\in V_1}d_G(x)\ge |V_1|+6$.
\end{proof}

\begin{claim}\label{clm:edges}
$V_2=\emptyset$ and $e(G[V_1])=3$.
\end{claim}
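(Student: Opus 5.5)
We count edges. Write $e(G)=n+2$ and split the edge set by where the endpoints lie: the edge $uv$, the $|V_1|$ edges from $v$ to $V_1$, the edges inside $V_1$, the edges between $V_1$ and $V_2$, and the edges inside $V_2$. Since $u$ is a leaf and $V_2\cap N_G(v)=\emptyset$, we get
$$n+2=e(G)=1+|V_1|+e(G[V_1])+e(V_1,V_2)+e(G[V_2]).$$
By Claim~\ref{clm:v1},
$$\sum_{x\in V_1}d_G(x)=|V_1|+2e(G[V_1])+e(V_1,V_2)\ge |V_1|+6,$$
so $2e(G[V_1])+e(V_1,V_2)\ge 6$. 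Also $n=2+|V_1|+|V_2|$, so the first identity becomes
$$e(G[V_1])+e(V_1,V_2)+e(G[V_2])=|V_2|+3.$$

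Next we bound the contribution of $V_2$. Every vertex of $V_2$ must be joined to $G-V_2$ by a path, since $G$ is connected (shown in the proof of part (1)) and $\delta(G)=1$ by Claim~\ref{clm:delta}. Hence each component of $G[V_2]$ sends at least one edge to $V_1$, and $e(V_1,V_2)+e(G[V_2])\ge |V_2|$. It follows that $e(G[V_1])\le 3$, with equality exactly when $G[V_2]$ is a forest whose components each send exactly one edge to $V_1$.

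The goal is to show that $V_2\neq\emptyset$ is impossible and that $e(G[V_1])\le 2$ is impossible. Combined with the inequality $2e(G[V_1])+e(V_1,V_2)\ge 6$, the cases $e(G[V_1])\le 2$ force $e(V_1,V_2)\ge 2$. Then either $V_2\neq\emptyset$ carries extra edges, or $G[V_2]$ contains a cycle, or some component of $G[V_2]$ has two attachments.

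Suppose $V_2\neq\emptyset$ and take $w\in V_2$. Then $uw\notin E(G)$, and $d_{G+uw}(u)=2$. So in an $H$-copy containing $uw$, either $d_H(u)=2$, which needs a common neighbour of $u$ and $w$, or $d_H(w)=q+4$. The first is impossible because $N_G(u)=\{v\}$ and $w\notin N_G(v)$. Hence $d_G(w)\ge q+3\ge 4$ for every $w\in V_2$. Feeding these degrees into the count $e(V_1,V_2)+2e(G[V_2])=\sum_{w\in V_2}d_G(w)\ge 4|V_2|$ contradicts $e(V_1,V_2)+e(G[V_2])\le |V_2|+3$ once $|V_2|\ge 1$. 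One has to check the small cases $|V_2|=1,2$ carefully, but there $d_G(w)\ge 4$ together with only $|V_2|+3$ available edges already fails. So $V_2=\emptyset$.

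With $V_2=\emptyset$ the identity gives $e(G[V_1])=3$ directly. This is consistent with Claim~\ref{clm:v1}, since then $\sum_{x\in V_1}d_G(x)=|V_1|+6$. I expect the main obstacle to be the degree-counting contradiction in the case $V_2\neq\emptyset$, and in particular being careful that the argument forcing $d_G(w)\ge q+3$ for $w\in V_2$ is airtight.
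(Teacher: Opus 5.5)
Your overall strategy is the paper's: for $w\in V_2$, the vertices $u$ and $w$ have no common neighbour, so $uw$ must be a pendant edge of $H$ centred at $w$, which gives $d_G(w)\ge q+3$. You then count edges against $e(G)=n+2$. That part is sound. The gap is in how you close the count. You claim that
$$\sum_{w\in V_2}d_G(w)=e(V_1,V_2)+2e(G[V_2])\ge 4|V_2|$$
contradicts $e(V_1,V_2)+e(G[V_2])\le |V_2|+3$, and that for small $|V_2|$ this ``already fails''. For $|V_2|=1$ and $q=1$ it does not. Take a single $w$ with exactly four neighbours in $V_1$ and $e(G[V_1])=0$. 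Then $e(G[V_1])+e(V_1,V_2)+e(G[V_2])=0+4+0=|V_2|+3$, so your identity and your degree bound both hold. Concretely, take $K_{2,4}$ with parts $\{v,w\}$ and $\{x_1,\dots,x_4\}$, plus a pendant vertex $u$ at $v$. This graph has $n=7$, $n+2$ edges, $d_G(v)=q+4$ and $d_G(w)=q+3$. It passes every test you actually apply in that step.

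What rules this configuration out is Claim~\ref{clm:v1}. You derived it as $2e(G[V_1])+e(V_1,V_2)\ge 6$ but never used it in the $V_2\neq\emptyset$ case. Double your identity $e(G[V_1])+e(V_1,V_2)+e(G[V_2])=|V_2|+3$ and subtract this inequality. This gives $e(V_1,V_2)+2e(G[V_2])\le 2|V_2|$, i.e.\ $\sum_{w\in V_2}d_G(w)\le 2|V_2|$. That contradicts $\sum_{w\in V_2}d_G(w)\ge (q+3)|V_2|$ for every $|V_2|\ge 1$ at once, with no small-case analysis. This is exactly the paper's degree-sum computation, which amounts to $2e(G)\ge 2n+4+(q+1)|V_2|$. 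With that repair, your deduction of $e(G[V_1])=3$ from the identity when $V_2=\emptyset$ is correct. The detour through connectivity, the forest structure of $G[V_2]$ and the bound $e(G[V_1])\le 3$ becomes unnecessary. That detour also cited connectivity from part (1), which was proved there under the assumption $e(G)\le n+1$.
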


\begin{proof}
For any $w\in V_2$, $G+uw$ has an $H$-copy containing $uw$.
Since $d(u,w)\ge 3$ and $d_{G+uw}(u)= 2$, $uw$ must be a pendant edge of $H$ with $d_{G+uw}(w)=q+4$.
Thus, $d_G(w)= q+3$. Since $n\ge |V_1|+2$, in conjunction with the conclusion of Claim~\ref{clm:v1}, it follows that
\begin{align*}
2e(G) &= d_G(u)+d_G(v)+\sum\limits_{x\in V_1}d_G(x)+\sum\limits_{x\in V_2}d_G(x)\\
&\ge 1+(|V_1|+1)+(|V_1|+6)+(q+3)(n-|V_1|-2)\\
&\ge (q+3)n-(q+1)(|V_1|+2)+4\\
&\ge 2n+4.
\end{align*}
The equality holds only if $n=|V_1|+2$ and $\sum\limits_{x\in V_1}d_G(x)= |V_1|+6$, which means $V_2=\emptyset$ and $e(G[V_1])=3$.
\end{proof}

By Claim~\ref{clm:edges}, $d_G(v)=n-1$. Since $G$ is $H$-free, $G[V_1]$ contains no $2K_2$-copy.
Combining with $e(G[V_1])=3$ by Claim~\ref{clm:edges}, all edges in $G[V_1]$ form a $S_4$ or $K_3$. It can be checked that all edges in $G[V_1]$ must form a $K_3$-copy.

Summarizing above, $G=K_4^{+(n-4)}$ and $\SAT(n, K_1\vee(2K_{2}\cup qK_1))=\{K_4^{+(n-4)}\}$.
\end{proof}

\section{Concluding remarks}
 In this paper, we focus on the saturation number of $K_1\vee F$, where $F$ contains some isolated vertices. This extends the common case where $K_s\vee F$ is 2-connected, resulting in more structures for the saturated graphs.

Building on Cameron and Puleo's \cite{Ca} general inequality, we show that $\sat(n,K_1 \vee F)= n-1+\sat(n-1, F)$ holds when $F=K_2\cup qK_1$, or $F=2K_2\cup qK_1$ for any $q\ge 1$; but fails when $F=K_{p-1}\cup K_1$ for $p\ge 4$.
We conjecture that this behavior is related to the particular decomposition used in Cameron and Puleo's inequality.
Theorem~\ref{thm:Kp+} demonstrates that when $p \ge 4$, the extremal graph of $K_1\vee (K_{p-1}\cup K_1)$ must be disconnected, which may account for the breakdown of the above equality. However, an extremal graph of $K_1\vee (K_2\cup K_1)$ is also disconnected, while the equality still holds​. The underlying reason is that $K_1\vee (K_2\cup K_1)$​ has another connected extremal graph which contains a full-degree vertex.

Besides this, we find that  for any $q\ge 1$, $\sat(n, K_1\vee (K_2\cup qK_1))=\sat(n, K_1\vee K_2)$ and $\sat(n, K_1\vee (2K_2\cup qK_1))=\sat(n, K_1\vee 2K_2)$. However, for any $p\ge 4$, $\sat(n, K_1\vee K_{p-1})\neq \sat(n, K_1\vee (K_{p-1}\cup K_1))$. Let $F'$ be the graph obtained by deleting all isolated vertices of $F$ and $\ell=|V(F)\backslash V(F')|$.
We conjecture that for any fixed $F'$, if $\ell$ is sufficiently large, all extremal graphs of $K_1\vee (F'\cup \ell K_1)$ become connected.
Below, we present some more specific open questions.

\begin{problem}
For any fixed graph $F'$, what relationship between $F'$ and $\ell$ ensures that extremal graphs for  $K_1\vee F$  are always connected?
\end{problem}

\begin{conjecture}
Let $F'$ be a graph. If an extremal graph for $ K_1\vee F'$ contains a full-degree vertex, then $\sat(n, K_1\vee (F'\cup \ell K_1))=\sat(n, K_1\vee F')$, where $\ell \ge 1$.
\end{conjecture}

Turning back to our original setting, we recall the main setup of this paper. Cliques play a central role in graph theory. So, we consider the case when $F'$ is a clique or the union of two cliques.
  We first investigate the case when $F'=K_{p-1}(p\ge 4)$, and $\ell=1$.
  Subsequently, we consider the case when $F'=K_2$ and $\ell$ is an arbitrary positive integer.
  Finally, we considered the case where $F'=2K_2$ and $\ell$ is an arbitrary positive integer(in this case, $F'$ can be regarded both as a 2-matching and as the disjoint union of two complete graphs).
Based on these findings, several natural directions for further research emerge. We highlight the following:

\begin{problem}
Determining $\sat(n, K_1\vee (K_p\cup qK_1))$, where $p\ge 3$ and $q\ge 2$.
\end{problem}

\begin{problem}
Determining $\sat(n, K_1\vee (pK_2\cup qK_1))$, where $p\ge 3$ and $q\ge 1$.
\end{problem}


We conclude with the saturation numbers for connected graphs of order 5 listed in Table 1. For completeness, we provide two supplementary proofs and include Fig.~\ref{5vertices} to illustrate the structure of some of the graphs from the table.
\begin{figure}[htbp]
    \centering
    \includegraphics[width=0.9\textwidth]{5vertices.png}
    \caption{Some 5-vertex graphs.}
    \label{5vertices}
\end{figure}

\begin{theorem}\label{thm:C5}
Let $n\ge 5$. If $n=6$ or $7$, then $\sat(n, G_5)=n$. Otherwise, $\sat(n, G_5)=n-1$.
\end{theorem}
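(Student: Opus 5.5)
The plan is to follow the template of the proof of Theorem~\ref{thm:K2}: prove an upper bound by explicit constructions, then a lower bound by splitting into the connected and disconnected cases. Since $G_5$ is only defined through Fig.~\ref{5vertices}, I describe the strategy in terms of its structure and assume $G_5$ is a connected $5$-vertex graph containing a triangle. Before anything else I will fix which edges of $G_5$ lie on that triangle and which are pendant or bridge edges, because every step below depends on this.

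\textbf{Upper bound.} For $n\ge 5$ with $n\ne 6,7$, I will exhibit a $G_5$-saturated tree on $n$ vertices. Natural candidates are a star, a double star, or a spider. In such a tree, adding any non-edge $xy$ creates exactly one cycle. I will choose the tree so that this cycle, together with the tree edges hanging off it, always contains a copy of $G_5$. For $n=6,7$ I will give a unicyclic $G_5$-saturated graph with $n$ edges, most likely a triangle or $C_4$ with suitable pendant vertices, and check saturation by hand.

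\textbf{Lower bound, disconnected case.} Let $G$ be $G_5$-saturated with at most $n-1$ edges. If $G$ is disconnected, at least one component is a tree, so $G$ has a vertex $u$ of degree at most $1$ in a tree component $G_1$. For any $v$ outside $G_1$, the edge $uv$ is a bridge of $G+uv$, so it must play the role of a bridge edge of $G_5$. As in the earlier sections, this forces every vertex outside $G_1$ to have a prescribed large degree, or to lie in a prescribed small dense structure. Summing degrees as in the proof of Theorem~\ref{thm:K2} should give $e(G)\ge n-1$, and also force $G$ to be connected once $n$ is large enough relative to $|V(G_5)|$.

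\textbf{Lower bound, connected case.} A connected $G$ has $e(G)\ge n-1$, with equality exactly when $G$ is a tree. This already gives $\sat(n,G_5)\ge n-1$ for all $n$.

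\textbf{Main obstacle: $n=6,7$.} Here I must show that no $G_5$-saturated graph with $n-1$ edges exists, i.e.\ no $G_5$-saturated tree and no disconnected $G_5$-saturated graph with $n-1$ edges.

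For trees, saturation forces small diameter: the unique cycle created by adding $xy$ has length $d(x,y)+1$, and $G_5$ contains only short cycles. This leaves only a few tree shapes, and I will check each one on $6$ and $7$ vertices, showing that some non-edge fails to create $G_5$. The point is that $5$-vertex copies need enough room around the cycle, while the star-like trees that work for large $n$ or for $n=5$ break down at $n=6,7$.

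For the disconnected case, the degree-counting constraints from the disconnected-case step become tight for these small orders. I will enumerate the possible component structures directly, for example $K_3$-type pieces plus a small tree. This exhaustive small-case verification is the step I expect to be most delicate.
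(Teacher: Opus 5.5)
Your proposal has a genuine gap at its foundation: the upper bound for $n\ne 6,7$ is meant to come from a $G_5$-saturated tree, and no such tree exists for any $n\ge 5$.

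This already follows from your own assumption that $G_5$ is a connected $5$-vertex graph containing a triangle. If $G$ is a tree and $xy\notin E(G)$, every triangle of $G+xy$ contains $xy$. So saturation forces $d(x,y)=2$ for all non-adjacent pairs, i.e.\ $G\cong S_n$. But $S_n+e\cong K_3^{+(n-3)}$, and its only connected $5$-vertex subgraph containing a triangle is $K_3^{+2}$. That is a different graph, and by Theorem~\ref{thm:K2} it has $\sat(n,K_3^{+2})=n-1$ for all $n\ge 5$, with no exceptional orders.

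For the actual $G_5$ (a triangle with a pendant path of length $2$), this is exactly the paper's first step. A saturated tree would have diameter at least $3$, and joining two vertices at maximum distance creates no triangle. Hence every connected $G_5$-saturated graph has at least $n$ edges, and your picture is inverted. The value $n-1$ is attained only by \emph{disconnected} graphs:
\begin{itemize}
\item $K_3\cup K_2$ for $n=5$;
\item $T_{n-2}\cup K_2$ for $n\ge 8$, where $T_p$ is a triangle with one pendant vertex at each of its vertices plus $p-6$ further pendant vertices at one of them.
\end{itemize}
So your expectation that degree counting forces connectivity for large $n$ is false. Your explanation of the exceptions (star-like trees breaking down at $n=6,7$) is also wrong. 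The exceptions occur because no disconnected configuration with $n-1$ edges fits on $6$ or $7$ vertices.

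The disconnected case also needs a different tool. Degree summation as in Theorem~\ref{thm:K2} cannot work here: in $T_{n-2}\cup K_2$ most vertices outside the tree component are leaves, so no large-degree condition is forced. The right argument is structural:
\begin{itemize}
\item An edge added between two components is a bridge, so it lies on the tail of the $G_5$-copy.
\item Hence the copy's triangle sits inside one of the two components, so at most one component is triangle-free.
\item This gives $e(G)\ge n-1$, with equality only if exactly one component is a tree and every other component is unicyclic with a triangle as its cycle.
\end{itemize}

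The real work for $n=6,7$ is excluding these configurations, as the paper does:
\begin{itemize}
\item Components of order at most $4$ must be complete.
\item If the tree component is $K_1$ (this covers $2K_3\cup K_1$), joining it to a triangle vertex $v$ creates no $G_5$. Here $v$ would have to be the middle of the tail, but its component's only triangle contains $v$.
\item For $n=7$ with tree component $K_2$, the $5$-vertex component is $K_3^{+2}$ or the bull. In both, joining the pendant vertex at $a$ to the triangle vertex $b$ (in the bull, $b$ being the other vertex carrying a pendant) leaves every triangle's two outside vertices non-adjacent. So no $G_5$ appears.
\end{itemize}
Together with $T_6$ and $T_7$ as upper-bound constructions, this gives $\sat(n,G_5)=n$ for $n=6,7$.
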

\begin{proof}Let $T_p(p\ge 6)$ be the graph of order $p$ formed by a triangle with each of its three vertices adjacent to one leaf, and the remaining $p-3$ leaves attached to a single vertex of the triangle. Obviously, $T_{n}(n\ge 6)$ is $G_5$-saturated. So, if $n\ge 6$, then $\sat(n, G_5)\le n$.
In particular, when $n=5$, $K_3\cup K_2$ is $G_5$-saturated; when $n\ge 8$, $T_{n-2}\cup K_2$ is $G_5$-saturated.
Thus, $\sat(n, G_5)\le n-1$ when $n=5$ or $n\ge 8$. We only need to establish the lower bound.

Let $G$ be a minimum $G_5$-saturated graph of order $n$. We claim that $G$ is not a tree. Otherwise, $G$ contains a $B_{3,2}$(see Fig.~\ref{5vertices}) or a $P_5$ copy since $G$ is $G_5$-saturated. Since $G$ is a tree, $\diam(G)\ge 3$. Let $u,v\in V(G)$ and $d(u, v)=\diam(G)$. Then $G+uv$ contains no $C_3$-copy, and thus, $G$ is not a $G_5$-saturated graph, a contradiction.
So, if $G$ is connected, then $e(G)\ge n$.

Now, assume that $G$ is disconnected. Since adding an edge between two distinct components yields a copy of $G_5$​, and this edge cannot be contained in any cycle, it follows that at most one component is $C_3$-free. Then $e(G)\ge n-1$. Thus, if  $n=5$ or $n\ge 8$,  $\sat(n, G_5)=n-1$.

So, it suffices to consider the case when $G$ is disconnected, and $n=6$ or $7$.
Recall we have shown that at most one component is $C_3$-free. If $G$ has three components, since $n\le 7$, then $G\cong 2K_3\cup K_1$ is not $G_5$-saturated, a contradiction. So, $G$ has two components, say $G', G''$. Let $|V(G')|=n'$, $|V(G'')|=n''$ and $n'\le n''$. If $n''\le 4$, then $G'$ and $G''$ are all complete. It can be checked that $e(G)\ge n$. Thus, we suppose that $n''\ge 5$.

If $n'=1$, we claim that $e(G'')\ge n''+1$.  Recall that at most one component is $C_3$-free.
Since $n'=1$, $G''$ has a triangle $T$. If $e(G'')\le n''$, $T$ is the unique triangle in $G$. Take $u\in V(G')$ and $v\in V(T)$. Then $G+uv$ contains no $G_5$-copy, a contradiction. So, $e(G)\ge e(G'')\ge n$.
If $n'=2$, then $n=7$ and $n''=5$. If $e(G'')\le n''$, as our proof before, $G''\cong K_3^{+2}$ or $B$(see Fig.~\ref{5vertices}). Then $G''$ is not $G_5$-saturated, and thus, $G$ is not $G_5$-saturated, a contradiction. Hence, $e(G)\ge e(G'')+1\ge n$. Thus, if $n=6$ or $7$, then $e(G)\ge n$ and $\sat(n, G_5)=n$.

This completes our proof.
\end{proof}

\begin{theorem}\label{thm:C4} \text{\bf \cite{Ollmann1972,Tuza1989}}
For $n \ge 5$, $\sat(n, C_4) =\lfloor\frac{3n-5}{2}\rfloor$. Moreover, if $G$ is a $C_4$-saturated graph with $n$ vertices and $\lfloor\frac{3n-5}{2}\rfloor$ edges, then $G$ has some of the structures shown in Fig.~\ref{fig:C4}; namely, if $n$ is even, then $G$ has a ``central'' triangle, each of whose vertices are adjacent to precisely one vertex of degree one, and the remaining vertices of $G$ are in adjacent pairs, each of them joined to a vertex of the central triangle; if $n$ is odd, then $G$ either is obtained from the previous construction by deleting one vertex of degree one, or consists of a $C_5$, two consecutive vertices of which are joined to arbitrary numbers of adjacent pairs.
\end{theorem}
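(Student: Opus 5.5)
Since this is the classical result of Ollmann and Tuza, the plan follows Tuza's short counting proof. It has three parts: an upper bound by construction, a lower bound on the degree sum, and a characterization obtained by tracking the equality cases of that bound.

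\textbf{Upper bound.} We check that the graphs of Fig.~\ref{fig:C4} are $C_4$-saturated with $\lfloor\frac{3n-5}{2}\rfloor$ edges.
\begin{itemize}
\item The central-triangle graph has the triangle, three pendant leaves, and pairs hung on triangle vertices. Any two vertices share at most one neighbour, so it is $C_4$-free. Any non-adjacent pair is joined by a path of length $3$ through the triangle, so adding that edge closes a $C_4$.
\item Counting gives $3+3+3\cdot\frac{n-6}{2}=\frac{3n-6}{2}$ edges for even $n$.
\item For odd $n$, deleting one leaf, or using the $C_5$ variant, gives $\frac{3n-5}{2}$ edges.
\end{itemize}

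\textbf{Lower bound.} Let $G$ be $C_4$-saturated on $n\ge 5$ vertices. First we collect structural facts.
\begin{enumerate}[(i)]
\item Any two vertices have at most one common neighbour, since $G$ is $C_4$-free.
\item Every non-adjacent pair $u,w$ is joined by a path of length exactly $3$. In particular, $G$ is connected and $\diam(G)\le 3$.
\item Two leaves cannot share a neighbour: if leaves $u,w$ share $x$, then $G+uw$ has no $u$--$w$ path of length $3$. Hence leaves have distinct neighbours. There is also at most one isolated configuration, since $n\ge 5$ forces connectivity.
\item If $u$ is a leaf with neighbour $x$, then every vertex other than $u,x$ is adjacent to $x$ or to a neighbour of $x$.
\item If $d(w)=2$ and $N(w)=\{a,b\}$, then either $ab\in E(G)$, so $w$ lies in a triangle, or every vertex is reached from $w$ within the distance constraints via $a$ or $b$. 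From this, degree-$2$ vertices are mostly forced to come in adjacent pairs $w,w'$ forming a triangle with a common hub.
\end{enumerate}

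With these facts we run a discharging argument. Each vertex starts with charge $d(v)$, and the goal is $\sum d(v)\ge 3n-5$. The rules are:
\begin{itemize}
\item Each leaf receives $2$ from its (unique, private) neighbour.
\item Each degree-$2$ vertex in an adjacent pair receives $\frac12$ from its hub.
\item Each remaining degree-$2$ vertex receives $1$ from a suitable high-degree neighbour.
\end{itemize}
We then show every vertex ends with charge at least $3$, except for a bounded set of vertices in a small "core" whose total deficit is at most $5$. Facts (i) and (iii) ensure hubs do not pay too much. A vertex $x$ of degree $d$ that supports a leaf and $t$ pairs has $d\ge 1+2t+(\text{core edges})$, so it keeps at least $3$ once it has at least two core neighbours. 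The core is then shown to be a triangle or a $C_5$, and the deficit there is computed exactly. This gives $2e(G)\ge 3n-5$ (respectively $3n-6$ for even $n$ by parity), so $e(G)\ge\lfloor\frac{3n-5}{2}\rfloor$.

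\textbf{Main obstacle.} The hard step is controlling the degree-$2$ vertices not lying in triangles, and pinning down the core so that the additive constant is exactly $-5$. First I would show that, if there is a degree-$2$ vertex outside a triangle, the $C_4$-freeness together with fact (ii) forces an induced $C_5$. Every other low-degree vertex then attaches to two consecutive $C_5$ vertices via adjacent pairs. This case gives the odd-$n$ $C_5$ family, and the other case gives the central-triangle family.

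The characterization then follows by examining when every inequality in the discharging is tight. Tightness forces exactly three leaves on a central triangle (or two, after deleting one for odd $n$), or the $C_5$ configuration, with all remaining vertices in hub-attached adjacent pairs.
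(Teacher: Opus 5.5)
The paper gives no proof of this statement; it is quoted from Ollmann and Tuza. Your proposal therefore has to stand on its own, and its lower bound does not. The upper-bound check is fine, and so are facts (i)--(iv). Your observation that a degree-$2$ vertex $w$ with non-adjacent neighbours $a,b$ lies on an induced $C_5$ $w\,a\,x\,y\,b$ is also correct. But the discharging step, which is the whole content of the lower bound, is only asserted, and the parts you do specify are inconsistent.

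\begin{itemize}
\item With your rules, a degree-$2$ vertex of an adjacent pair ends with charge $2+\frac12<3$.
\item If you raise that payment to $1$, a triangle vertex carrying a leaf and $t$ pairs has degree $3+2t$, pays $2+2t$, and is left with $1$. So your claim that it ``keeps at least $3$ once it has at least two core neighbours'' is false, and each such vertex genuinely carries a deficit of $2$.
\item Consequently $2e(G)\ge 3n-5$ is false for even $n$. The central-triangle graph has $2e(G)=3n-6$; already the net (a triangle with three pendant leaves, $n=6$, $e=6$) shows this.
\end{itemize}
The core deficit must be allowed to reach $6$. The correct target is $2e(G)\ge 3n-6$, and parity then upgrades this to $3n-5$ for odd $n$, not the other way round.

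More seriously, the structural statements that drive your scheme are properties of the extremal graphs, not of an arbitrary $C_4$-saturated graph. These are: degree-$2$ vertices ``mostly'' coming in adjacent pairs on a triangle with a hub, every other low-degree vertex hanging on two consecutive vertices of a $C_5$, and the leftover deficit sitting in a triangle or $C_5$ core. Using them in the lower bound is circular.

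For example, the Petersen graph minus a vertex is $C_4$-saturated, with $9$ vertices and $12$ edges. Every two non-adjacent vertices of the Petersen graph are joined by exactly two paths of length $3$, and at most one of them uses the deleted vertex. This graph is triangle-free, and its three degree-$2$ vertices are pairwise at distance $3$, with all their neighbours of degree $3$. None of your rules pays them, and they do not lie in a common triangle or $C_5$.

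What is missing is a charging argument (or an induction) valid for every $C_4$-saturated graph on $n\ge 5$ vertices that proves $\sum_v (3-d(v))\le 6$. It must handle degree-$2$ vertices on induced $C_5$'s and degree-$3$ vertices with no surplus. Two facts that sharpen your (iii)--(iv) would help:
\begin{itemize}
\item For a leaf $u$ with support $x$, the graph $G[N(x)\setminus\{u\}]$ is a perfect matching. Each $w\in N(x)\setminus\{u\}$ needs a path $u\,x\,m\,w$, and $C_4$-freeness allows only one such $m$.
\item The supports of distinct leaves are pairwise adjacent, so there are at most three leaves.
\end{itemize}
Only once such a global bound is proved does your equality analysis have something to work with. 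As it stands, the characterization part is unsupported as well.
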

\begin{figure}[H]
    \centering
\begin{tikzpicture}[scale=0.9]
\begin{scope}[xshift=0cm]

    \node[inner sep=1.5pt](A) at (1,0)[circle,fill]{};
    \node[inner sep=1.5pt](B) at (3,0)[circle,fill]{};
    \node[inner sep=1.5pt](C) at (2,1.5)[circle,fill]{};
    \draw (A) -- (B) -- (C) -- (A);

    \node[inner sep=1.5pt](A1) at (0,-0.5)[circle,fill]{};
    \node[inner sep=1.5pt](A2) at (0,0.5)[circle,fill]{};
    \node[inner sep=1.5pt](A3) at (1,-1)[circle,fill]{};
    \draw (A) -- (A3);
    \draw (A) -- (A1) -- (A2) -- (A);

    \node[inner sep=1.5pt](B1) at (4,0.5)[circle,fill]{};
    \node[inner sep=1.5pt](B2) at (4,-0.5)[circle,fill]{};
    \node[inner sep=1.5pt](B3) at (3,-1)[circle,fill]{};
    \draw (B) -- (B3);
    \draw (B) -- (B1) -- (B2) -- (B);

    \node[inner sep=1.5pt](C1) at (1.2,2.2)[circle,fill]{};
    \node[inner sep=1.5pt](C2) at (2.8,2.2)[circle,fill]{};
    \node[inner sep=1.5pt](C3) at (2,2.5)[circle,fill]{};
    \draw (C) -- (C1);
    \draw (C) -- (C2) -- (C3) -- (C);
    \draw (2,-2) node   {$(a)$};
\end{scope}

\begin{scope}[xshift=7cm]
   \node[inner sep=1.5pt](A) at (1,0)[circle,fill]{};
    \node[inner sep=1.5pt](B) at (3,0)[circle,fill]{};
        \node[inner sep=1.5pt](E) at (0.5,1.5)[circle,fill]{};
    \node[inner sep=1.5pt](D) at (2,2.5)[circle,fill]{};
            \node[inner sep=1.5pt](C) at (3.5,1.5)[circle,fill]{};
    \draw (A) -- (B) -- (C)-- (D) -- (E) -- (A);

    \node[inner sep=1.5pt](A1) at (0.2,-0.6)[circle,fill]{};
    \node[inner sep=1.5pt](A2) at (1,-1)[circle,fill]{};
    \draw (A) -- (A1) -- (A2) -- (A);

    \node[inner sep=1.5pt](B1) at (3.8,-0.6)[circle,fill]{};
    \node[inner sep=1.5pt](B2) at (3,-1)[circle,fill]{};
    \draw (B) -- (B1) -- (B2) -- (B);
        \draw (2,-2) node   {$(b)$};
\end{scope}

\begin{scope}[xshift=14cm]

    \node[inner sep=1.5pt](A) at (1,0)[circle,fill]{};
    \node[inner sep=1.5pt](B) at (3,0)[circle,fill]{};
    \node[inner sep=1.5pt](C) at (2,1.5)[circle,fill]{};
    \draw (A) -- (B) -- (C) -- (A);

    \node[inner sep=1.5pt](A1) at (0,-0.5)[circle,fill]{};
    \node[inner sep=1.5pt](A2) at (0,0.5)[circle,fill]{};
    \node[inner sep=1.5pt](A3) at (1,-1)[circle,fill]{};
    \draw (A) -- (A3);
    \draw (A) -- (A1) -- (A2) -- (A);

    \node[inner sep=1.5pt](B1) at (4,0.5)[circle,fill]{};
    \node[inner sep=1.5pt](B2) at (4,-0.5)[circle,fill]{};
    \node[inner sep=1.5pt](B3) at (3,-1)[circle,fill]{};
    \draw (B) -- (B3);
    \draw (B) -- (B1) -- (B2) -- (B);

    \node[inner sep=1.5pt](C2) at (1.5,2.5)[circle,fill]{};
    \node[inner sep=1.5pt](C3) at (2.5,2.5)[circle,fill]{};
    \draw (C) -- (C2) -- (C3) -- (C);
        \draw (2,-2) node   {$(c)$};
\end{scope}
\end{tikzpicture}
\caption{$C_4$-saturated graphs with $\lfloor\frac{3n-5}{2}\rfloor$ edges. $(a)$: $n$ even, $(b)$ and $(c)$: $n$ odd.}\label{fig:C4}
\end{figure}

\begin{theorem}\label{thm:C4+}
Let $n\ge 5$. Then
$\sat(n, C_4^+)=\lfloor\frac{3n-5}{2}\rfloor.$
\end{theorem}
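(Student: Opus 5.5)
The plan is to reduce to the $C_4$ case via Theorem~\ref{thm:C4}. Here $C_4^+$ is the $5$-vertex graph obtained from $C_4$ by attaching one pendant edge.

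\textbf{Upper bound.} I would take any $C_4$-saturated graph $G$ on $n\ge 5$ vertices with $\lfloor\frac{3n-5}{2}\rfloor$ edges, as described in Theorem~\ref{thm:C4}. Every graph in Fig.~\ref{fig:C4} is connected. Since $G$ is $C_4$-free, it is $C_4^+$-free. Adding any non-edge $e$ creates a $C_4$-copy $C$ in $G+e$. Because $G$ is connected and $n\ge 5$, some vertex of $C$ has a neighbour outside $V(C)$, and that edge extends $C$ to a $C_4^+$-copy. Hence $\sat(n,C_4^+)\le\lfloor\frac{3n-5}{2}\rfloor$.

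\textbf{Lower bound, structure of components.} Let $G$ be a minimum $C_4^+$-saturated graph of order $n$, and look at its components.

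A component containing a $C_4$ must have exactly $4$ vertices, or else connectivity yields a $C_4^+$-copy. Such a component must also be complete: adding an edge inside a $4$-vertex component creates nothing on $5$ vertices. So every component containing a $C_4$ is a $K_4$.

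Now consider the $C_4$-free components. An edge $uv$ added between two different components is a bridge of $G+uv$, so in any $C_4^+$-copy it must be the pendant edge. Then one of $u,v$ lies on a $C_4$ of $G$. Hence at most one component $G_0$ is $C_4$-free, and every other component is a $K_4$.

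\textbf{Lower bound, the component $G_0$.} Let $h=|V(G_0)|$, with $h=0$ if there is no such component.

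If $1\le h\le 4$, adding an edge inside $G_0$ cannot create a $C_4^+$, since $G_0+e$ is a component on at most $4$ vertices. So $G_0$ is complete, and being $C_4$-free forces $h\le 3$ and $e(G_0)=\binom{h}{2}$.

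If $h\ge 5$, let $e$ be a non-edge inside $G_0$ and take a $C_4^+$-copy in $G+e$ using $e$. The edge $e$ cannot be the pendant edge, because then the $C_4$ part would lie in $G_0$. So $e$ lies on the $C_4$, and $G_0$ is $C_4$-saturated. Theorem~\ref{thm:C4} then gives $e(G_0)\ge\lfloor\frac{3h-5}{2}\rfloor$.

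\textbf{Counting.} We have $e(G)=\frac{3}{2}(n-h)+e(G_0)$.
\begin{itemize}
\item For $h\ge 5$ this is at least $\frac{3(n-h)}{2}+\lfloor\frac{3h-5}{2}\rfloor$. Since $4\mid n-h$, the quantity $\frac{3(n-h)}{2}$ is an integer, so this equals $\lfloor\frac{3n-5}{2}\rfloor$.
\item For $h\in\{0,1,2,3\}$, with $e(G_0)\in\{0,0,1,3\}$, a direct check gives $e(G)\ge\frac{3n-4}{2}>\lfloor\frac{3n-5}{2}\rfloor$.
\end{itemize}
This proves $\sat(n,C_4^+)=\lfloor\frac{3n-5}{2}\rfloor$.

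The step needing most care is the bridge argument. It shows that at most one component is $C_4$-free, and that the $C_4$-free component, when $h\ge 5$, is genuinely $C_4$-saturated and not merely $C_4^+$-saturated. Both rely on the fact that a pendant edge of $C_4^+$ must be attached to a vertex already lying on a $C_4$.
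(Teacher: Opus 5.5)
Your proof is correct and follows essentially the same route as the paper. Components containing a $C_4$ must be $K_4$'s, and the bridge argument leaves at most one $C_4$-free component, which is either a clique on at most $3$ vertices or a $C_4$-saturated graph handled by Theorem~\ref{thm:C4}. The count then uses that $\frac{3(n-h)}{2}$ is an integer. You are somewhat more explicit than the paper in justifying the upper bound and in explaining why the $C_4$-free component is genuinely $C_4$-saturated, but the argument is the same.
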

\begin{proof}
It is clear that the graphs in Fig.~\ref{fig:C4} are also $C_4^+$-saturated. So, $\sat(n, C_4^+)\le \lfloor\frac{3n-5}{2}\rfloor$. We need only prove the lower bound.

Let $G$ be a minimum $C_4^+$-saturated graph of order $n$.
If $G$ is $C_4$-free, then $G$ is also a $C_4$-saturated graph. By Theorem~\ref{thm:C4}, $e(G)\ge \lfloor\frac{3n-5}{2}\rfloor$. Now, suppose that $G$ contains a $C_4$-copy $C$. Since $G$ is $C_4^+$-free, $G[C]$ forms a component in $G$. Also, $G[C]\cong K_4$. Otherwise, adding an edge in $G[C]$ produces no $C_4^+$-copy, a contradiction.
Since $n\ge 5$, $G$ is disconnected. Assume that $G_{1}, G_{2},\ldots, G_{s}\,(s\geq 2)$ are all components in $G$.

Let $u\in V(G_i)$ and $v\in V(G_j)$. Then
 $G+uv$ has a $C_4^+$-copy containing $uv$. Since $uv$ is not contained in any $C_4$ of $G+uv$, then $uv$ must be the pendent edge in $C_4^+$. Thus, at most one component in $G$ is $C_4$-free.
 Assume that each $G_{i}(i\in [s-1])$ contains a $C_4$-copy. Thus, $G_{i}\cong K_4$ for each $i\in [s-1]$. Let $|V(G_s)|=n_s$. If $n_s\le 4$, then $G_s\cong K_s$, for otherwise, adding an edge in $G_s$ produces no $C_4^+$-copy, a contradiction. Thus, $e(G)\ge \frac{3(n-n_s)+n_s(n_s-1)}{2}=\frac{3n+n_s(n_s-4)}{2}>\lfloor\frac{3n-5}{2}\rfloor$.
If $n_s\ge 5$, then $G_s$ is a connected $C_4^+$-saturated graph. By our previous proof, $e(G_s)\ge \lfloor\frac{3n_s-5}{2}\rfloor$ and $e(G)\ge \frac{3(n-n_s)}{2}+\lfloor\frac{3n_s-5}{2}\rfloor= \lfloor\frac{3n-5}{2}\rfloor$.

This completes our proof.
\end{proof}
\renewcommand{\arraystretch}{0.95}
\begin{longtable}{c|c|c}
     \hline
      \textbf{Graph} & \textbf{Saturation number ($n$ is sufficiently large)} & \textbf{Reference}\\
      \hline
      $S_5$ & $\lceil\frac{3n-6}{2}\rceil$ & \cite{Kaszonyi}\\
      \hline
      $P_5$ & $\lceil\frac{5n-4}{6}\rceil$ & \cite{Kaszonyi}\\
       \hline
      $B_{3,2}$ & $\lceil\frac{4n-3}{5}\rceil$ & \cite{Faudree2009}\\
            \hline
      $K_{3}^{+2}$ & $n-1$ & Theorem~\ref{thm:K2}\\
                \hline
      $G_5$ &   $\begin{cases}
    n-1,& \mbox{ if } n=5 \mbox{ or } n\ge 8 \\
    n,&  \mbox{ if } n=6,7 \end{cases}$
   & Theorem~\ref{thm:C5}\\
                   \hline
      $C_5$ & $\lceil\frac{10(n-1)}{7}\rceil$ & \cite{Chen2009,Chen2014}\\
                         \hline
      $C_4^+$ & $\lfloor\frac{3n-5}{2}\rfloor$ & Theorem~\ref{thm:C4+}\\
      \hline
     B & $\lfloor\frac{4n-3}{3}\rfloor$ & \cite{Hua}\\
        \hline
      $G_{6,1}$ & $n+2$  & \cite{Faudree2009}\\
      \hline
      $G_{6,2}$ & $\lceil\frac{3n-4}{2}\rceil$  & \cite{Hua2}\\
       \hline
      $K_{2,3}$ & $2n-3$ & \cite{Chen2011}\\
            \hline
     Book $K_2\vee 3K_1$ & $2n-3$ & \cite{GChen2009}\\
     \hline
     $K_5- P_4$ &
      $ \begin{cases}
    \lfloor\frac{3(n-1)}{2}\rfloor+2,& \mbox{ if } n \mbox{ is even }\\
    \lfloor\frac{3(n-1)}{2}\rfloor,& \mbox{ otherwise }
    \end{cases}$
 & \cite{Faudree2013}\\
              \hline
      $K_5- S_4$  &
       $ \begin{cases}
    \frac{3n}{2},& n\equiv  0 \pmod{4}\\
    \frac{3n-3}{2},& n\equiv  1,3 \pmod{4}\\
        \frac{3n-4}{2},& n\equiv  2 \pmod{4}
    \end{cases}$
       & \cite{Faudree2013}\\
           \hline
      $K_5- P_3$  & $2n-3$  & \cite{Faudree2013}\\
       \hline
   $K_5-2K_2=K_1\vee C_4$ & $\lfloor\frac{5n-10}{2}\rfloor$ & \cite{Song}\\
       \hline
      $K_5^-=K_3\vee 2K_1$ & $\lfloor \frac{5n-8}{2} \rfloor$ & \cite{GChen2009}\\
      \hline
      $K_5$ & $3n-6$ & \cite{erdos1964}\\
       \hline
     \caption{Saturation numbers for connected graphs of order 5.}
\end{longtable}

  In fact, a forthcoming paper by Ji et al. will give the saturation numbers for $G_{6,3}$ and $G_7$. Therefore, the only connected graph on 5 vertices whose saturation number has not been determined is $G_{6,4}$  (see Fig.~\ref{5vertices}). We pose the following problem.
\begin{problem}
Determine the saturation number for $G_{6,4}$  (see Fig.~\ref{5vertices}).
\end{problem}

\end{document}